%% file: DIRK.tex
\documentclass{amsart}

\usepackage{geometry}
\usepackage{bibentry}

\input{AERMacros}

\newcommand{\AJS}[1]{#1}

\nobibliography*

\begin{document}

\title[DIRK schemes]{For DIRK schemes $U=B$}
\title[DIRK schemes]{\AJS{Bochner stability for $B$-stable DIRK schemes}}

\author[A.~Ramirez]{Anthony E.~Ramirez}
\email[A.~Ramirez]{aramir13@utk.edu}
\author[A.J.~Salgado]{Abner J.~Salgado}
\email[A.J.~Salgado]{asalgad1@utk.edu}

\address{Department of Mathematics, University of Tennessee, Knoxville TN 37996 USA.}

\date{Draft version of \today}

\subjclass[2020]{65LO6, 65J08, 65M12, 65L20}

\keywords{
    diagonally implicit Runge--Kutta methods,
    $B$-stability,
    algebraic stability,
    abstract evolution equations,
    Gelfand triples,
    gradient flows,
    discrete energy dissipation}


\begin{abstract}
    \AJS{In \emph{[\bibentry{MR4673496}]} the notion of $U$-stability for Diagonally Implicit Runge-Kutta (DIRK) schemes was introduced. Here we establish the equivalence between $U$- and $B$- (algebraic) stability for two- and three-stage DIRK schemes, which then} provides suitable Bochner norm a priori estimates for $B$-stable methods. \AJS{As applications}, we first prove the convergence of $U$-stable DIRK schemes for linear coercive evolution problems under minimal regularity via energy estimates and compactness. Second, we apply these discretizations to gradient flows, which allow us to derive discrete local energy dissipation inequalities and provide \AJS{counterexamples that} demonstrate the limitations of stagewise energy monotonicity.
\end{abstract}

\maketitle

\section{Introduction}

In \cite{MR4673496} a notion of stability, called $U$-stability, for a class of diagonally implicit Runge-Kutta (DIRK) schemes was introduced. This notion seems particularly suitable for the discretization of evolution equations in Gelfand triples, as it reproduces the basic a priori estimates that the continuous problem possesses, and that merely follow from the structural assumptions of the ``spatial'' operator. No further regularity on the solutions is required, and the stability proof respects the Gelfand triple structure of the problem; see Definition~\ref{def:Ustability} for a precise statement and further discussion.

Stability notions for ODE integrators are plentiful in the literature. The notions of zero-, $A$-, $A(\theta)$-, $B$-, or $L$-stability immediately come to mind, and any attempt at a complete list is doomed to fail. For this reason, we are content with referring to the existing and vast literature; see \cite{MR3559553,MR1227985,MR2657217,MR2840298}. Many of these, however, seem better suited for ODEs or evolution equations with a different structure than the ones we are focused on here since, for instance, these only allow  one to obtain stability in $L^\infty(0,T;H)$ which is not enough for convergence of these schemes. We refer to Section~\ref{sec:Notation} for notation.


Out of all these notions, that of algebraic or $B$-stability seems relevant to our discussion, since it naturally fits problems where dissipation is present, and there seem to be some proofs, with different levels of generality and abstraction, of convergence for $B$-stable schemes when applied to dissipative problems. For example, in the context of ODEs with a one-sided Lipschitz condition, general convergence results have been proven, for instance, in \cite[Section IV.12]{MR2657217}. For dynamical systems where dissipativity is obtained via an inner product, in \cite{MR1293524} it is shown that an algebraically stable Runge-Kutta scheme guarantees the existence of an absorbing set and preserves dissipativity for any fixed time step. For gradient flows, \cite{MR3194795} shows that algebraically stable schemes monotonically decrease the discrete energy at each time step under a step-size restriction. In addition, \cite{MR4121887} studied the contractive properties of these schemes for convex gradient systems. For linear problems, \cite{crouzeix1975approximation} has shown convergence under minimal regularity for $B$-stable schemes via a spectral decomposition of the operator.

It is remarkable, however, that to our knowledge most works dealing with Runge-Kutta schemes applied to evolution problems in Gelfand triples are concerned with the derivation of error estimates under suitable smoothness assumptions; see, for instance, \cite{MR1284670,MR2329662}. One notable exception is \cite{MR2600543} where convergence under minimal regularity for stiffly accurate schemes is presented, under the assumption that the spatial operator is monotone.

This brings us to one of the main goals of this work. We show that, for DIRK schemes, the notions of $U$- and $B$- stability actually coincide. This implicitly provides a proof of some Bochner norm a priori estimates for $B$-stable DIRK schemes, those that follow from $U$-stability, and that we have not been able to locate elsewhere. On the other hand, this proves convergence of $U$-stable schemes in scenarios where $B$-stable ones are known to converge.

To illustrate and apply this interplay we, first, prove convergence under minimal regularity for a class of linear evolution problems in Gelfand triples. A result that, for $B$-stable schemes, seems to be part of the numerical folklore, but for which we have not been able to provide a precise reference; at least in the form we provide it here --- via energy estimates and compactness arguments. As a second application, since it is known that $B$-stable schemes converge to gradient flows \cite{MR2566077}, we complement this result with some new properties that follow from $U$-stability.

Our presentation, essentially, follows the program we outlined out above. We introduce notation and preliminaries in Section~\ref{sec:Notation}. The equivalence in question is described in Section~\ref{sec:equivalence}. The convergence for linear problems is presented in Section~\ref{sec:ConvLinear}; and, finally, gradient flows are discussed in Section~\ref{sec:GradFlows}.

As a final introductory remark we mention that we limit our discussion to two and three stage schemes since, as shown in \cite{MR458890}, there is little gain, from the consistency point of view, in considering more stages.

\section{Notation and preliminaries}
\label{sec:Notation}

Let us begin by introducing the notation and standing assumptions we shall adopt. First, the relation $A \lesssim B$ means that $A \leq c B$ for a nonessential constant $c$ that may change at each occurrence. $A \coloneqq B$ means equality by definition. For $r \in [1,\infty]$, its H\"older conjugate is denoted by $r'$.

We denote by $T>0$ some positive, final, time. For a Banach space $X$ and $p \in [1,\infty]$ the spaces of vector-valued integrable functions are denoted and normed the usual way, i.e.,
$ L^p(0,T;X)$. For $k \in \Natural_0$ and $\gamma \in (0,1]$, $C^{k,\gamma}([0,T];X)$ is also understood as expected. For simplicity, we shall write $C([0,T];X) \coloneqq C^{0,0}([0,T];X)$.

\subsection{Evolution problems}
To describe the class of evolution problems we are interested in, we assume that the so-called state space $V$ is Banach and separable, the pivot space $H$ is Hilbert and separable, with $V \hookrightarrow H \eqsim H' \hookrightarrow V'$. We call this structure a Gelfand triple. The inner product in $H$ will be denoted by $(\cdot,\cdot)$, whereas the duality pairing for $V$ is denoted by $\langle \cdot, \cdot \rangle$. We recall that in the Gelfand triple framework this duality can be understood as an extension of the inner product of $H$ in the sense that
\[
  \langle v,w \rangle = (v,w), \qquad \forall v \in H, \quad w \in V.
\]

The ``spatial'' operator $\mclA : V \to V'$ is assumed to be $p$-coercive, to satisfy a $q$-growth condition, and some condition for local solvability. To be precise, 
\begin{description}
  \item[$p$-coercivity] There are $p \in (1,\infty)$ and $\alpha >0$ such that
  \begin{equation}
  \label{eq:pcoercive}
    \alpha \nrm{ v }_V^p \leq \langle \mclA v, v \rangle, \qquad \forall v \in V.
  \end{equation}
  
  \item[$q$-growth] There are $q\geq p$ and $\AJS{\mclM} : \Real \to \Real$, that is nonnegative and increasing, such that
  \[
    \nrm{ \mclA v }_{V'} \leq \AJS{\mclM} \left( \nrm{ v }_H \right) \nrm{ v }_V^{p/q'}, \qquad \forall v\in V.
  \]
  
  \item[Local solvability] There is $\gamma_0 >0$ such that, for every $\gamma \in (0,\gamma_0)$ and all $F \in V'$, the equation
  \[
    v + \gamma \mclA v = F
  \]
  has a unique solution $v \in V$.
\end{description}
Having fixed the exponents $p$ and $q$ we define
\[
  \mclW \coloneqq \left\{ w \in L^p(0,T;V) \ \middle| \ \diff_t w \in L^{q'}(0,T;V') \right\},
\]
and recall that $\mclW \hookrightarrow C([0,T];H)$.

We are now ready to describe the class of problems we are interested in. Given $u_0 \in H$ and $f \in L^{p'}(0,T;V')$ we must find $u \in \mclW$ such that
\[
  u(0) = u_0,
\]
and
\begin{equation}
\label{eq:VarSolution}
  \int_0^T \langle \diff_t u(t) + \mclA u(t), v(t) \rangle \diff t = \int_0^T \langle f(t), v(t) \rangle \diff t, \qquad \forall v \in L^p(0,T;V).
\end{equation}
We refer the reader to \cite[Part II]{MR3014456} for a study of existence, uniqueness, and properties of solutions.

\AJS{
\subsubsection{Gradient flows}
A particular case of the construction we described above is that of gradient flows, which in our setting are understood as follows. Let $V = H \eqsim V'$, $f \equiv 0$, and $\mclA = \Diff \Phi$, where $\Phi : H \to \overline{\Real}$ is a convex, LSC, and coercive potential with effective domain $\mclD(\Phi) \subset H$ such that $\overline{\mclD(\Phi)}  = H$. In addition, we assume that $u_0 \in \mclD(\Phi)$. In this case, we may rewrite \eqref{eq:VarSolution} as an evolution variational inequality. We must find
\[
  u \in H^1(0,T;H) \coloneqq \left\{ w \in L^2(0,T; H) \ \middle| \ \diff_t w \in L^2(0,T;H) \right\},
\]
such that $u(0)= u_0$ and, for almost every $t \in (0,T)$,
\begin{equation}
\label{eq:EVI}
  \left( \diff_t u(t), u(t) - v \right) + \Phi(u(t)) - \Phi(v) \leq 0, \quad \forall v \in H. 
\end{equation}
We recall that one of the fundamental properties of this gradient flow is the \emph{energy identity}
\begin{equation}
\label{eq:EnergyDissipation}
  \left\| \diff_t u(t) \right\|_H^2 + \diff_t \Phi(u(t)) =0, \qquad \text{a.e. } t \in (0,T).
\end{equation}
We refer the reader to \cite{MR348562} and \cite[Lecture 12]{MR4886890} for thorough treatments on gradient flows, and to \cite{MR1737503,MR1377244} for their temporal discretization via the implicit Euler method, their a priori and a posteriori error analyses.}

\subsection{DIRK schemes}
\label{sub:DIRK}

Here we review notation and order conditions regarding DIRK schemes as well as relevant stability notions. We let $s \in \{2,3\}$ be the number of stages, $\msfA \in \Real^{s \times s}$ be lower triangular and with positive diagonal entries, $\bfb, \bfc \in \Real^s$. We organize these in the so-called Butcher table
\begin{equation}
\label{eq:ButcherTable}
  \begin{tabular}{c|c}
    $\bfc$ & $\msfA$\\
    \hline
     & $\bfb^\top$
  \end{tabular}
\end{equation}
We comment that, while DIRK schemes can be defined for arbitrary $s \in \mathbb{N}$, as \cite{MR458890} shows, very little is gained by considering a higher number of stages.

Let us now explain how a Runge-Kutta scheme is applied to \eqref{eq:VarSolution}. We begin by introducing a partition of $[0,T]$ of size $\mclN \in \Natural$:
\[
  0 = t_0 < t_1 < \cdots < t_\mclN = T.
\]
We set, for $n = 0, \ldots, \mclN-1$,
\[
  \tau_n = t_{n+1} - t_n,
\]
and denote $I_n = [t_n,t_{n+1}]$, for $n \in \{ 0, \ldots, \mclN-1 \}$. By $\bftau \to \bfzero$ we shall denote
\[
  \lim_{\mclN\to \infty} \max_{n=0}^{\mclN-1} \tau_n = 0.
\]

Our approximation then computes sequences $\{u_n\}_{n=1}^\mclN \subset V$ and $\{v_{n,i}\}_{i=1,n=0}^{s,\mclN-1} \subset V$ as follows. For $n = 0, \ldots, \mclN-1$ the \emph{stages} are computed, for $i=1,\ldots,s$, by
\begin{equation}
\label{eq:Stages}
  \frac1{\tau_n} \left( v_{n,i} - u_n, w \right) + \sum_{j=1}^i a_{i,j} \langle \mclA v_{n,j}, w \rangle = \sum_{j=1}^i a_{i,j}  \langle f_{n,j}, w \rangle, \qquad \forall w \in V.
\end{equation}
Then we advance via
\begin{equation}
\label{eq:Advance}
  \frac1{\tau_n} \left( u_{n+1} - u_n ,w \right) + \sum_{i=1}^s b_i \langle \mclA v_{n,i}, w \rangle = \sum_{i=1}^s b_i \langle f_{n,i}, w \rangle, \qquad \forall w \in V.
\end{equation}
Here the sequence $\{f_{n,i}\}_{i=1,n=0}^{s,\mclN-1} \subset V'$ is meant to be an approximation $f_{n,i} \approx f(t_n + c_i \tau_n)$ which is not necessarily meaningful. Below we shall provide suitable ways of approximating these point values.

\subsubsection{Order conditions}
Let $\bfone = [1, \ldots, 1]^\top \in \Real^s$. We recall that, for consistency of order one, we must have
\[
  \bfb^\top \bfone = 1, \qquad \msfA \bfone = \bfc.
\]
In addition to these, for consistency of order two, we require
\[
  \bfb^\top \bfc = \frac12.
\]
Finally, for consistency of order three all the conditions above are necessary, and
\[
  \bfb^\top \msfA \bfc = \frac16.
\]

\subsubsection{$U$-stability}

We now recall the main stability notion advanced in \cite{MR4673496}. Since it will be useful in what follows we recall that, from \eqref{eq:Stages}--\eqref{eq:Advance}, we may obtain; see \cite[Proposition 3.2]{MR4673496}, the following extrapolation identity:
\begin{equation}
\label{eq:Extrapolation}
  u_{n+1} = \left( 1- \bflambda^\top \bfone \right) u_n + \sum_{i=1}^s \lambda_i v_{n,i},
\end{equation}
where 
\begin{equation}
\label{eq:DefOfLambda}
  \bflambda^\top = \bfb^\top \msfA^{-1}.
\end{equation}

For $s=2$ we define
\begin{align*}
  \delta_1 &= \frac12(\lambda_1 + \lambda_2)(2 -\lambda_1 - \lambda_2) , &
  \delta_2 &= \frac12 \lambda_2(2 - \lambda_2) ,
  \\
  \nu_{11} &= a_{11} \left[ \lambda_1(1-\lambda_2) + \lambda_2(2-\lambda_2) \right] , &
  \nu_{22} &= a_{22} \lambda_2,
  \\
  \nu_{12} &= \lambda_2 \left[ a_{21} + a_{11}(\lambda_1 + \lambda_2-2) \right] ,
\end{align*}
and
\[
  \msfQ(2) \coloneqq
  \begin{bmatrix}
    \delta_1 & -\delta_1 - \frac{\nu_{12}}{2a_{11}} & \frac{\nu_{12}}{2a_{11}} \\
    -\delta_1 - \frac{\nu_{12}}{2a_{11}} & \delta_1 + \delta_2 + \frac{\nu_{12}}{a_{11}} & - \delta_2 - \frac{\nu_{12}}{2a_{11}} \\
    \frac{\nu_{12}}{2a_{11}} & - \delta_2 - \frac{\nu_{12}}{2a_{11}} & \delta_2
  \end{bmatrix} \in \Real^{3 \times 3} ;
\]
whereas, when $s=3$,
\begin{align*}
    \delta_1 &= \frac12(\lambda_1 + \lambda_2 + \lambda_3)(2-\lambda_1 - \lambda_2 - \lambda_3) , \\
    \delta_2 &= \frac12(\lambda_2 + \lambda_3)(2-\lambda_2 - \lambda_3),
    \\
    \delta_3 &= \frac12\lambda_3 (2-\lambda_3), \\
    \nu_{11} &=  a_{11} (\lambda_1 (1 - \lambda_2 - \lambda_3) + (2 - \lambda_2 - \lambda_3) (\lambda_2 + \lambda_3)),
    \\
    \nu_{22} &=  a_{22} (\lambda_2 (1 - \lambda_3) + (2 - \lambda_3) \lambda_3), \\
    \nu_{33} &= a_{33}\lambda_3,
    \\
    \nu_{12} &=  [a_{21} (\lambda_2 (1- \lambda_3) + (2 - \lambda_3) \lambda_3)+ a_{11} (\lambda_2 (-2 + \lambda_1 + \lambda_2) + 2 (-1 + \lambda_2) \lambda_3 + \lambda_3^2)], \\
    \nu_{13} &=  \lambda_3 (a_{31} + a_{11} \lambda_1 + a_{21} (-2 + \lambda_2 + \lambda_3)),
    \\
    \nu_{23} &=  \lambda_3 (a_{32} + a_{22} (-2 + \lambda_2 + \lambda_3)),
\end{align*}
and the matrix $\msfQ(3) \in \Real^{4 \times 4}$ \AJS{is defined} via
\[
  \msfQ(3) \coloneqq 
  \begin{bmatrix}
    \delta_1 & -\delta_1 - \frac{\nu_{12}}{2a_{11}} - \frac{\nu_{13}}{2a_{11}} & \frac{\nu_{12}}{2a_{11}} - \frac{\nu_{23}}{2a_{22}} + \frac{a_{21}\nu_{23}}{2a_{11}a_{22}} & \frac{\nu_{13}}{2a_{11}} + \frac{\nu_{23}}{2a_{22}} - \frac{a_{21}\nu_{23}}{2a_{11}a_{22}}  \\
    -\delta_1 - \frac{\nu_{12}}{2a_{11}} - \frac{\nu_{13}}{2a_{11}}  & \delta_1 + \delta_2 + \frac{ \nu_{12}}{a_{11}} + \frac{ \nu_{13}}{a_{11}} & -\delta_2 - \frac{\nu_{12}}{2a_{11}} - \frac{a_{21}\nu_{23}}{2a_{11}a_{22}} & - \frac{\nu_{13}}{2a_{11}} + \frac{a_{21}\nu_{23}}{2a_{11}a_{22}} \\
    \frac{\nu_{12}}{2a_{11}} - \frac{\nu_{23}}{2a_{22}} + \frac{a_{21}\nu_{23}}{2a_{11}a_{22}} & -\delta_2 - \frac{\nu_{12}}{2a_{11}} - \frac{a_{21}\nu_{23}}{2a_{11}a_{22}} & \delta_2 + \delta_3  + \frac{\nu_{23}}{a_{22}} &  -\delta_3 - \frac{\nu_{23}}{2a_{22}} \\
    \frac{\nu_{13}}{2a_{11}} + \frac{\nu_{23}}{2a_{22}} - \frac{a_{21}\nu_{23}}{2a_{11}a_{22}} & - \frac{\nu_{13}}{2a_{11}} + \frac{a_{21}\nu_{23}}{2a_{11}a_{22}}  & -\delta_3 - \frac{\nu_{23}}{2a_{22}} & \delta_3
  \end{bmatrix} .
\]
With this at hand, we define the quadratic form $\mclQ_s : H^{s+1} \to \Real$ as
\begin{equation}
\label{eq:QQuadFormUstab}
  \mclQ_s(w_1, \ldots, w_{s+1}) = \sum_{i,j=1}^{s+1} q(s)_{i,j} (w_i,w_j).
\end{equation}

The relevant stability notion is given next.

\begin{definition}[$U$-stability]
\label{def:Ustability}
  We say that the DIRK scheme \eqref{eq:Stages}--\eqref{eq:Advance} is $U$-stable if $b_i>0$ for all $i=1,\ldots, s$ and the matrix $\msfQ(s)$, or equivalently the bilinear form $\mclQ_s$ of \eqref{eq:QQuadFormUstab}, is positive semi-definite.
\end{definition}

The main use of this stability notion is summarized in the following result.

\begin{proposition}[stability]
\label{prop:BochnerStable}
  Assume that \AJS{the} DIRK scheme \eqref{eq:Stages}--\eqref{eq:Advance} is $U$-stable. Then there are positive $\{\nu_i\}_{i=1}^s$ such that, for every \AJS{$\mclN \in \Natural$, every time partition $\{t_n\}_{n=0}^\mclN$, and every $n \in \{0, \ldots, \mclN-1\}$ we have the local energy identity
  \begin{equation}
    \label{eq:BochnerStableLocal}
    \frac12 \nrm{u_{n+1}}_H^2 + \mclQ_s(u_n,v_{n,1},\ldots, v_{n,s}) + \tau_n \sum_{i=1}^s \nu_i \langle \mclA v_{n,i} , v_{n,i} \rangle =
    \frac12 \nrm{u_n}_H^2 + \tau_n \sum_{i=1}^s \nu_i \langle f_{n,i}, v_{n,i} \rangle ,
  \end{equation}
  which in turn implies the global estimate,
  \begin{equation}
  \label{eq:BochnerStableGlobal}
    \frac12\max_{n=0}^{\mclN} \nrm{u_{n}}_H^2 + \sum_{n=1}^{\mclN} \tau_n \sum_{i=1}^s \nu_i \nrm{ v_{n,i} }_V^p \lesssim 
    \frac12 \nrm{u_0}_H^2 + \sum_{n=1}^{\mclN} \tau_n \sum_{i=1}^s \nu_i \nrm{ f_{n,i} }_{V'}^{p'},
  \end{equation}
  and the dual norm bound}
  \begin{equation}
  \label{eq:DualNormBound}
    \sum_{n=0}^{\mclN-1} \tau_n \nrm{ \frac{u_{n+1} - u_n}{\tau_n} }_{V'}^{q'} \lesssim \nrm{u_0}_H^2 + \sum_{n=1}^{\mclN} \tau_n \sum_{i=1}^s \nu_i \nrm{f_{n,i}}_{V'}^{p'}.
  \end{equation}
\end{proposition}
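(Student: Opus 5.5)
The identity \eqref{eq:BochnerStableLocal} is purely algebraic, and I would derive it by testing with well-chosen stage combinations. Set $\nu_i \coloneqq \nu_{ii}/a_{ii}$, i.e., $\nu_s = \lambda_s$ and, for $s=3$, $\nu_2 = \lambda_2(1-\lambda_3)+(2-\lambda_3)\lambda_3$ and similarly for $\nu_1$. Positivity of these weights should follow from $b_i>0$, because $\bflambda^\top\msfA = \bfb^\top$ gives $b_s = a_{ss}\lambda_s$ and, by back substitution, the remaining $\nu_i$ are expressible through the $b_j$ and $\msfA$. I would check this for $s=2,3$ directly.

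For the identity itself, test \eqref{eq:Advance} with $w = u_{n+1}$ and use $(a-b,a) = \frac12\|a\|^2-\frac12\|b\|^2+\frac12\|a-b\|^2$. Then substitute the extrapolation identity \eqref{eq:Extrapolation}, $u_{n+1} = (1-\bflambda^\top\bfone)u_n + \sum_i\lambda_i v_{n,i}$. This produces terms $\tau_n b_i\langle \mclA v_{n,i} - f_{n,i}, u_n\rangle$ and $\tau_n b_i \langle \mclA v_{n,i}-f_{n,i}, v_{n,j}\rangle$. The cross terms with $j\ne i$ and the $u_n$ terms must be eliminated. For this I would use the stage equations \eqref{eq:Stages}, solved recursively, which say $\tau_n(\mclA v_{n,i} - f_{n,i}) = -\frac{1}{a_{ii}}\big[(v_{n,i}-u_n) + \sum_{j<i} a_{ij}\tau_n(\mclA v_{n,j}-f_{n,j})\big]$. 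This expresses each $\tau_n(\mclA v_{n,i}-f_{n,i})$ for $i\ge2$ as an $H$-combination of $u_n, v_{n,1},\dots,v_{n,i}$ plus the diagonal-type term. Plugging these in, tested against the appropriate stages, collects every non-diagonal duality term into $H$-inner products of $(u_n,v_{n,1},\dots,v_{n,s})$. Only the diagonal terms $\tau_n\nu_i\langle \mclA v_{n,i}-f_{n,i},v_{n,i}\rangle$ survive. The $H$-inner products, together with $\frac12\|u_{n+1}-u_n\|_H^2$ expanded via \eqref{eq:Extrapolation}, should assemble exactly into $\mclQ_s$ with the coefficients $\delta_i$ and $\nu_{ij}/(2a_{ii})$ displayed in $\msfQ(s)$.

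The main obstacle is this bookkeeping: verifying that the resulting symmetric matrix is precisely $\msfQ(2)$ or $\msfQ(3)$. The row sums of $\msfQ(s)$ vanish, which is consistent with the fact that the form must annihilate $u_n=v_{n,i}$ for all $i$. This serves as a check. I would do the computation symbolically, matching coefficients of each $(w_k,w_l)$.

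The global estimate \eqref{eq:BochnerStableGlobal} then follows from \eqref{eq:BochnerStableLocal}. Drop $\mclQ_s\ge0$ by $U$-stability, use \eqref{eq:pcoercive}, and apply Young's inequality $\langle f,v\rangle \le \frac{\alpha}{2}\|v\|_V^p + C\|f\|_{V'}^{p'}$. Absorb the first term, and sum over $n$ up to arbitrary $m$, taking the maximum over $m$. For \eqref{eq:DualNormBound}, \eqref{eq:Advance} gives $\|(u_{n+1}-u_n)/\tau_n\|_{V'} \le \sum_i b_i(\|\mclA v_{n,i}\|_{V'}+\|f_{n,i}\|_{V'})$. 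The $q$-growth bound together with the uniform $H$ bound gives $\|\mclA v_{n,i}\|_{V'}^{q'} \le \mclM(C)^{q'}\|v_{n,i}\|_V^{p}$, since $\|v_{n,i}\|_H$ is controlled: from $H$-coercivity of the stage solve, or from the local identity applied stagewise. Also $\|f\|^{q'} \lesssim 1 + \|f\|^{p'}$ since $q'\le p'$. Multiplying by $\tau_n$, summing, and invoking \eqref{eq:BochnerStableGlobal} concludes. Bounding $\|v_{n,i}\|_H$ uniformly is a minor secondary point that I would handle by testing \eqref{eq:Stages} with $v_{n,i}$ inductively in $i$.
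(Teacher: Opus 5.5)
\textbf{The weights are wrong.} Your plan is sound, but the one concrete choice you commit to, $\nu_i=\nu_{ii}/a_{ii}$, is incorrect, and with it the identity you propose to verify is false.

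Once $\mclQ_s$ is fixed by \eqref{eq:QQuadFormUstab}, the weights in \eqref{eq:BochnerStableLocal} are forced. Any tuple $(u_n,v_{n,1},\dots,v_{n,s})\in V^{s+1}$ arises from the scheme for a suitable choice of the $f_{n,i}$. Moreover, $\tau_n(\mclA v_{n,i}-f_{n,i})$ is the $i$-th entry of $\msfA^{-1}(\bfone u_n-\boldsymbol{v})$, in which $v_{n,i}$ appears with coefficient $-1/a_{ii}$. Inserting \eqref{eq:Extrapolation} and matching the coefficients of $\nrm{v_{n,i}}_H^2$ on both sides gives
\[
  \nu_i = a_{ii}\left( q(s)_{i+1,i+1} + \tfrac12\lambda_i^2 \right).
\]
Evaluated, this is $\nu_s=a_{ss}\lambda_s=b_s$ (not $\lambda_s$); $\nu_1=\nu_{11}+\nu_{12}$ for $s=2$; and $\nu_2=\nu_{22}+\nu_{23}$, $\nu_1=\nu_{11}+\nu_{12}+\nu_{13}$ for $s=3$. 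You dropped both the factor $a_{ii}$ and the contributions $\nu_{ij}$, $j>i$. The correct sums equal $b_i$, which is exactly the paper's lemma on quadrature weights.

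A concrete failure: for the scheme with $\msfM=\msfO$ in Section~\ref{sec:GradFlows}, one has $\bflambda=(-2,2)^\top$ and $\msfQ(2)=\msfO$, so your recipe gives $\nu=(2,2)$. Take $H=V=\Real$, $\mclA v=kv$, $f=0$, $k\tau_n=1$, $u_n=1$. Then $\frac12-\frac12u_{n+1}^2=\frac{171}{392}=\frac23v_{n,1}^2+\frac13v_{n,2}^2$, whereas $2(v_{n,1}^2+v_{n,2}^2)=\frac{585}{392}$. (Comparing $O(\tau_n)$ terms as $\tau_n\to0$ already forces $\sum_i\nu_i=\sum_ib_i$.) So your coefficient matching would not close, and your positivity discussion is about the wrong numbers. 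With $\nu_i=b_i$, positivity is simply part of Definition~\ref{def:Ustability}.

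\textbf{A shorter derivation with the correct weights.} This also removes your ``main obstacle''. Set $g_i\coloneqq\mclA v_{n,i}-f_{n,i}$. By \eqref{eq:Stages}, $\tau_n\msfA\boldsymbol{g}=\bfone u_n-\boldsymbol{v}$. The right-hand side lies in $H^s$ and $\msfA$ is invertible, so every $g_i$ lies in $H$; this also justifies testing with $u_{n+1}$ even if it lies only in $H$. Hence the classical algebraic-stability computation applies verbatim. From $u_{n+1}=u_n-\tau_n\sum_ib_ig_i$ and $u_n=v_{n,i}+\tau_n\sum_ja_{ij}g_j$,
\[
  \frac12\nrm{u_{n+1}}_H^2+\frac{\tau_n^2}{2}\sum_{i,j=1}^s m_{ij}\,(g_i,g_j)+\tau_n\sum_{i=1}^s b_i\langle g_i,v_{n,i}\rangle=\frac12\nrm{u_n}_H^2,
\]
where $m_{ij}$ are the entries of $\msfM$. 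Write $\bfz=\boldsymbol{v}-\bfone u_n=\msfL\boldsymbol{\zeta}$, with $\boldsymbol{\zeta}$ collecting the consecutive differences $v_{n,1}-u_n, v_{n,2}-v_{n,1},\dots$. Then the middle term equals $\frac12\bfz^\top\tilde{\msfM}\bfz=\boldsymbol{\zeta}^\top\msfK\boldsymbol{\zeta}=\mclQ_s(u_n,v_{n,1},\dots,v_{n,s})$, by the identity $\msfK=\frac12\msfL^\top\tilde{\msfM}\msfL$ proved in Section~\ref{sec:equivalence}. This is \eqref{eq:BochnerStableLocal} with $\nu_i=b_i$.

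\textbf{Comparison with the paper and the remaining steps.} The paper only cites \cite{MR4673496} for the whole proposition, so with this correction your argument becomes a self-contained proof. Your passage to \eqref{eq:BochnerStableGlobal} (drop $\mclQ_s\geq0$, use coercivity and Young, sum, maximize) is fine. So is your passage to \eqref{eq:DualNormBound}, via $q$-growth plus inductive $H$-bounds on the stages obtained by testing \eqref{eq:Stages} with $v_{n,i}$. Both carry data-dependent constants, which the $\lesssim$ in the statement already absorbs.
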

\begin{proof}
  Corollaries 4.2 and 5.2, and Propositions 6.3 and 6.4 in \cite{MR4673496}.
\end{proof}

We present a slight improvement on this result by explicitly computing the value of the weights $\{\nu_i\}_{i=1}^s$.

\begin{lemma}[quadrature weights]
  Assume that DIRK scheme \eqref{eq:Stages}--\eqref{eq:Advance} is $U$-stable. Then, for every $i = 1,\ldots, s$, we have $\nu_i = b_i$.
\end{lemma}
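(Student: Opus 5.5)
The plan is to extract the weights $\nu_i$ from the local energy identity \eqref{eq:BochnerStableLocal} of Proposition~\ref{prop:BochnerStable} by testing it on a very simple problem. Since the $\nu_i$ do not depend on $\mclA$, $f$, the partition, or $u_n$, any admissible instance of \eqref{eq:Stages}--\eqref{eq:Advance} determines them. We then compare the terms linear in $\tau_n$.

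\textbf{Step 1: a scalar model problem.} Take $V = H = \Real$, $p=q=2$, and $\mclA v = \varepsilon v$ with $\varepsilon>0$. This operator is $2$-coercive, has linear growth, and is locally solvable. Fix arbitrary data $u_n \in \Real$ and $f_{n,1},\ldots,f_{n,s}\in\Real$, a step $\tau=\tau_n$, and write $\bfv = (v_{n,1},\ldots,v_{n,s})^\top$ and $\bff = (f_{n,1},\ldots,f_{n,s})^\top$. The stages solve
\[
(\msfI + \tau\varepsilon \msfA)\bfv = u_n\bfone + \tau \msfA \bff ,
\]
and \eqref{eq:Advance} gives
\[
u_{n+1} = u_n - \tau \varepsilon \bfb^\top\bfv + \tau \bfb^\top \bff .
\]
We let $\varepsilon\to 0$ in \eqref{eq:BochnerStableLocal}. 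Everything depends continuously on $\varepsilon$, and the term $\tau\sum_i \nu_i \varepsilon v_{n,i}^2$ vanishes. This yields the limiting identity
\[
\tfrac12 u_{n+1}^2 + \mclQ_s(u_n,\bfv) = \tfrac12 u_n^2 + \tau\sum_i \nu_i f_{n,i} v_{n,i},
\qquad
\bfv = u_n\bfone + \tau\msfA\bff,\quad u_{n+1} = u_n + \tau\bfb^\top\bff .
\]

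\textbf{Step 2: the quadratic form contributes only at order $\tau^2$.} A direct inspection of $\msfQ(2)$ and $\msfQ(3)$ shows that every row sums to zero, so $\msfQ(s)\bfone = \bfzero$ in $\Real^{s+1}$. We decompose $(u_n,\bfv) = u_n \bfone + \tau(0,\msfA\bff)$. The constant vector and the cross terms then drop out, and
\[
\mclQ_s(u_n,\bfv) = \tau^2 \mclQ_s(0,\msfA\bff).
\]
We also have
\[
\tfrac12u_{n+1}^2 = \tfrac12 u_n^2 + \tau u_n \bfb^\top\bff + O(\tau^2),
\qquad
\tau\sum_i\nu_i f_{n,i}v_{n,i} = \tau u_n \sum_i \nu_i f_{n,i} + O(\tau^2).
\]

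\textbf{Step 3: compare coefficients.} Both sides are polynomials in $\tau$, so we may equate the coefficients of $\tau$. This gives $u_n \sum_i b_i f_{n,i} = u_n \sum_i \nu_i f_{n,i}$ for all $u_n\in\Real$ and all $\bff\in\Real^s$. Hence $\nu_i = b_i$ for every $i$.

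The main obstacle is justifying that the $\nu_i$ furnished by Proposition~\ref{prop:BochnerStable} are fixed constants, independent of the problem data. This has to be read off from the construction in \cite{MR4673496}, where the identity is obtained by testing the stage and advance equations with fixed combinations of stages. If that independence were in doubt, I would instead redo that testing argument symbolically and identify the coefficient of $\langle f_{n,i}, v_{n,i}\rangle$ directly. The row-sum property of $\msfQ(s)$ is a routine check from the displayed entries.
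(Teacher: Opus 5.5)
Your proof is correct, but it takes a different route from the paper's.

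\textbf{The paper's route.} The paper checks $\nu_i=b_i$ by direct algebra on explicit formulas taken from \cite{MR4673496}. It uses $\nu_s=\nu_{ss}=a_{ss}\lambda_s$ and, for $s=3$, $\nu_2=\nu_{22}+\nu_{23}=a_{22}\lambda_2+a_{32}\lambda_3$. It recognizes these as rows of the triangular system $\msfA^\top\bflambda=\bfb$. It then obtains $\nu_1$ from the constraint $\sum_i\nu_i=\sum_i b_i$, which is also imported from the reference.

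\textbf{Your route.} You never use the formulas for the $\nu_{ij}$ or the $\delta_i$. Instead you show that the identity \eqref{eq:BochnerStableLocal} by itself forces the weights. You need only $\msfQ(s)\bfone=\bfzero$ and a scalar test problem with vanishing operator. The row-sum property says $\mclQ_s$ depends only on differences, which the paper also exploits in Section~\ref{sec:equivalence}.

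\textbf{The obstacle you flag.} It is harmless. The $\nu_i$ are determined by the Butcher table alone; the paper's proof expresses them through the $\nu_{ij}$, which are explicit functions of the tableau. So the same weights serve every admissible problem and every $\varepsilon>0$. Your limit in $\varepsilon$ and your coefficient comparison are then legitimate, because $\tau_0$ can be taken to be any value in $(0,T]$.

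\textbf{What each approach buys.} Your argument is uniform in $s$, and it proves rather than assumes $\sum_i\nu_i=\sum_i b_i$. If you also compare the coefficients of $\tau^2$, you obtain $\mclQ_s(0,\msfA\bff)=\tfrac12\bff^\top\msfM\bff$. Combined with $\msfQ(s)\bfone=\bfzero$, this is exactly the relation $\msfK=\tfrac12\msfL^\top\tilde{\msfM}\msfL$ that Section~\ref{sec:equivalence} establishes by hand. This holds at least for schemes where \eqref{eq:BochnerStableLocal} is available, which gives $U$-stability $\Rightarrow$ $B$-stability essentially for free. The paper's route, in exchange, is a direct check on the coefficients. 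It needs no auxiliary problem, no limit in $\varepsilon$, and no appeal to \eqref{eq:BochnerStableLocal} holding with the same weights across a family of problems.
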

\begin{proof}
  For $s=2$, we first observe that
  \[
    \nu_2 = \nu_{22} = a_{22}\lambda_2.
  \]
  Since $\msfA^\top \bflambda = \bfb$ and $\msfA$ is lower triangular, the second row of this system dictates $a_{22}\lambda_2 = b_2$, thus $\nu_2 = b_2$. By the consistency requirement $\nu_1 + \nu_2 = b_1 + b_2$, it follows immediately that $\nu_1 = b_1$.

  In the three stage case, that is $s=3$,  we begin by noting that $\nu_3 = \nu_{33} = a_{33}\lambda_3 = b_3$. Next, using that $\nu_2 = \nu_{22} + \nu_{23}$ and expanding we obtain
  \begin{align*}
    \nu_2 &= a_{22}[\lambda_2(1-\lambda_3) + (2-\lambda_3)\lambda_3] + \lambda_3[a_{32} + a_{22}(-2+\lambda_2+\lambda_3)]
    \\
    &= a_{22}\lambda_2 - a_{22}\lambda_2\lambda_3 + 2a_{22}\lambda_3 - a_{22}\lambda_3^2 + a_{32}\lambda_3 - 2a_{22}\lambda_3 + a_{22}\lambda_2\lambda_3 + a_{22}\lambda_3^2
    \\
    &= a_{22}\lambda_2 + a_{32}\lambda_3,
  \end{align*}
  which is the second row of $\msfA^\top \bflambda = \bfb$, hence $\nu_2 = b_2$. Finally, the global consistency constraint $\sum_{i=1}^3 \nu_i = \sum_{i=1}^3 b_i$ enforces $\nu_1 = b_1$.
\end{proof}

\subsubsection{$B$-stability}

We now briefly discuss the notion of $B$-stability, or algebraic stability. While this can be applied to any RK scheme, we restrict our attention to the case of DIRK schemes as in \eqref{eq:ButcherTable}. For that, we introduce the so-called M-matrix of the scheme
\begin{equation}
\label{eq:MMatrixBstab}
  \msfM \coloneqq \msfB \msfA + \msfA^\top \msfB - \bfb\bfb^\top, \qquad \qquad \msfB \coloneqq \diag(b_1, \ldots, b_s).
\end{equation}

\begin{definition}[$B$-stability]
\label{def:Bstability}
  We say that the DIRK scheme \eqref{eq:Stages}--\eqref{eq:Advance} is $B$-stable if $b_i>0$ for all $i=1,\ldots, s$ and the matrix $\msfM$ is positive semi-definite.
\end{definition}


The terminology used in Definition~\ref{def:Bstability} goes back to the classical theory of nonlinear stability for Runge-Kutta methods. Notice that linear stability notions, such as $A$-stability, are stated in terms of a scalar linear equation and therefore are not readily applicable to nonlinear problems. In short, $A$-stability does not imply that the scheme inherits the contractive or dissipative properties of the continuous problem. On the other hand, $B$-stability is suited for nonlinear dissipative systems. In the original finite-dimensional form, one considers two solutions of $\bfy' = \AJS{\bff}(t,\bfy)$ under a one-sided Lipschitz, or dissipativity, condition of the form
\begin{equation}
\label{eq:VecFieldMonotone}
  (\AJS{\bff}(t,\bfy)-\AJS{\bff}(t,\bfz))^\top(\bfy-\bfz) \leq 0.
\end{equation}
This condition guarantees that the distance between any two flows is nonincreasing in time. We say that a Runge-Kutta method is $B$-stable if its discrete solution operator preserves this property. This notion was introduced in \cite{butcher1975stability} and then characterized in \cite{MR525638,MR518683} and extended beyond Runge-Kutta methods in \cite{burrage1980non}; see also \cite[Section IV.12]{MR2657217} for a comprehensive classical treatment. In this terminology, algebraic stability is the coefficient-level condition presented in Definition ~\ref{def:Bstability}. Under the usual assumptions, algebraic stability is the easily verifiable condition behind $B$-stability; in the present work, where all weights are assumed positive and the coefficient matrix $\msfA$ is fixed, we use the coefficient condition in Definition~\ref{def:Bstability} as our definition.

What makes $B$-stability a desirable property is that it turns condition \eqref{eq:VecFieldMonotone} on the vector field \AJS{$\bff$} into a discrete contraction estimate. The quadratic form corresponding to $\msfM$ is exactly what appears when one compares the difference of two Runge-Kutta steps and expands the squared norm. Hence, the \AJS{nonnegativity} of $\msfM$ serves the same purpose in the discrete case as monotonicity of the operator in the continuous case. This is the fundamental reason that algebraically stable methods are suitable to dissipative equations, monotone operator equations, and gradient problems.

In addition, algebraic stability is relevant for preserving a wide amount of properties in dissipative dynamical systems. In \cite{MR1293524} it is shown that algebraically stable Runge-Kutta methods have an absorbing set and remain dissipative regardless of the chosen time step. It was proven in \cite{MR3194795} that algebraically stable methods monotonically decrease the discrete energy at each step under step-size restrictions. The paper \cite{MR4121887} studies the dissipativity of such schemes applied to convex gradient problems. More recent applications include \cite{MR4007568}, where positive definiteness conditions of stiffly accurate Runge-Kutta methods are used to categorize energy-stable solvers for convex gradient flows.

These ideas have also been considered for infinite-dimensional problems, see \cite{MR2329662} where error estimates are derived for algebraically stable Runge-Kutta methods solving nonlinear dissipative evolution equations in Hilbert spaces, and \cite{MR2566077} shows that algebraically stable DIRK schemes converge unconditionally when applied to dissipative evolution equations. In the Gelfand-triple setting, \cite{MR2600543} provides proof of convergence of stiffly accurate implicit Runge-Kutta schemes for nonlinear evolution problems with monotone operators, and the stability assumptions imply algebraic stability.

\AJS{Below, in Section~\ref{sec:GradFlows} we provide some positive and negative results regarding the energy dissipation of $B$-stable schemes when applied to gradient flows of the form \eqref{eq:EVI}.  }

\section{Equivalence between $U$- and $B$-stability}
\label{sec:equivalence}

This section presents the first main result of our work. We establish the equivalence  between the two notions of stability we are interested in, namely, $U$- and $B$-stability for $2$-stage and $3$-stage DIRK schemes. We achieve this by showing that the matrix $\msfM$, introduced in \eqref{eq:MMatrixBstab}, is positive semi-definite if and only if the quadratic form $\mclQ_s$, defined in \eqref{eq:QQuadFormUstab}, has the same property.

To achieve this, we begin by recalling that similarity transformations do not alter the spectrum. Thus, since $\msfA$ is invertible, we define
\begin{equation}
\label{eq:Mtilde}
  \tilde{\msfM} \coloneqq \msfA^{-\top}\msfM\msfA^{-1} = \msfA^{-\top}\msfB + \msfB\msfA^{-1} - \bflambda\bflambda^\top,
\end{equation}
where we recall that $\bflambda = \msfA^{-\top} \bfb$. Our strategy will then be to show that $\tilde{\msfM}$ is similar to a matrix related to the quadratic form $\mclQ_s$. We do this separately for $s = 2$ and $s=3$.

\subsection{The two stage case}

We begin with the case $s=2$.

\begin{theorem}[equivalence for $s=2$]
  For any two stage DIRK scheme with an invertible matrix $\msfA$ and $b_i >0$, $i=1,2$ $U$-stability is equivalent to $B$-stability.
\end{theorem}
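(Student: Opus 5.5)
The plan is to reduce both positivity conditions to a comparison of two $2\times 2$ symmetric matrices, and to show that these matrices coincide up to a factor $\tfrac12$.

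\emph{Step 1: reduce $\msfQ(2)$.} Every row of $\msfQ(2)$ sums to zero; this can be read off directly from its definition, e.g.\ the first row gives $\delta_1 - \delta_1 - \frac{\nu_{12}}{2a_{11}} + \frac{\nu_{12}}{2a_{11}} = 0$. Hence $\msfQ(2)\bfone = \bfzero$. Consequently $\mclQ_2(w_1,w_2,w_3)$ depends only on the differences $e_1 = w_2 - w_1$ and $e_2 = w_3 - w_1$, and one may take $w_1 = 0$. The map $(w_1,w_2,w_3)\mapsto (e_1,e_2)$ is onto. Therefore $\msfQ(2)$ is positive semi-definite if and only if its lower-right block
\[
  \msfR \coloneqq \begin{bmatrix} \delta_1 + \delta_2 + \frac{\nu_{12}}{a_{11}} & -\delta_2 - \frac{\nu_{12}}{2a_{11}} \\ -\delta_2 - \frac{\nu_{12}}{2a_{11}} & \delta_2 \end{bmatrix}
\]
is positive semi-definite. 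To justify this I would use the explicit congruence $\msfP^\top \msfQ(2) \msfP$, where $\msfP$ is the invertible matrix mapping $(w_1,e_1,e_2)$ to $(w_1,w_1+e_1,w_1+e_2)$. This congruence produces the block-diagonal matrix $\diag(0,\msfR)$, and Sylvester's law of inertia then gives the equivalence. The same reduction is consistent with the interpretation of $w_{i+1} - w_1$ as $v_{n,i} - u_n$.

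\emph{Step 2: reduce $\msfM$.} By \eqref{eq:Mtilde}, $\msfM$ is positive semi-definite if and only if $\tilde{\msfM} = \msfA^{-\top}\msfB + \msfB\msfA^{-1} - \bflambda\bflambda^\top$ is; note that this is a congruence rather than a similarity, which is exactly what preserves semi-definiteness. The natural conjecture is that $\msfR = \tfrac12 \tilde{\msfM}$. This is plausible because, writing $v_{n,i} - u_n = \tau_n (\msfA \bfk)_i$ with $\bfk$ the stage slopes, the $B$-stability form $\bfk^\top \msfM \bfk$ becomes $e^\top \tilde{\msfM} e$ in the stage differences.

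\emph{Step 3: entrywise verification.} I would compute $\msfA^{-1}$ explicitly:
\[
  \msfA^{-1} = \begin{bmatrix} 1/a_{11} & 0 \\ -a_{21}/(a_{11}a_{22}) & 1/a_{22} \end{bmatrix},
\]
and use the relations $b_2 = a_{22}\lambda_2$ and $b_1 = a_{11}\lambda_1 + a_{21}\lambda_2$, both coming from $\msfA^\top\bflambda = \bfb$. The $(2,2)$ entry is immediate: $\tilde{\msfM}_{22} = 2b_2/a_{22} - \lambda_2^2 = \lambda_2(2-\lambda_2) = 2\delta_2$, which matches $\msfR_{22} = \delta_2$.

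\emph{Main obstacle.} The real work lies in the off-diagonal entry and the $(1,1)$ entry. For these:
\begin{itemize}
  \item the off-diagonal entry requires checking $-\delta_2 - \frac{\nu_{12}}{2a_{11}} = \tfrac12\bigl(-\frac{b_2 a_{21}}{a_{11}a_{22}} - \lambda_1\lambda_2\bigr)$;
  \item the $(1,1)$ entry requires checking $\delta_1 + \delta_2 + \frac{\nu_{12}}{a_{11}} = \tfrac12\bigl(2b_1/a_{11} - \lambda_1^2\bigr)$.
\end{itemize}
I would substitute $\nu_{12} = \lambda_2[a_{21} + a_{11}(\lambda_1+\lambda_2-2)]$ and eliminate $b_1$ and $b_2$ in favour of $\lambda$ and $\msfA$, after which both identities should reduce to polynomial identities in $\lambda_1$ and $\lambda_2$. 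For instance, in the off-diagonal case the $a_{21}$-terms cancel since $b_2/a_{22} = \lambda_2$.

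If the identification $\msfR = \tfrac12\tilde{\msfM}$ were off by a further congruence, I would fall back on comparing $\msfR$ with $\msfS^\top\tilde{\msfM}\msfS$ for a simple triangular $\msfS$, which is again sufficient by Sylvester's law. Combining Steps 1–3, and noting that both definitions impose the same condition $b_i > 0$, yields the equivalence.
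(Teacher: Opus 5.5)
Your proposal is correct and is essentially the paper's argument. The paper works with the consecutive differences $(w_2-w_1,\,w_3-w_2)$ and proves $\msfK=\tfrac12\msfL^\top\tilde{\msfM}\msfL$, whereas your differences $(w_2-w_1,\,w_3-w_1)$ give $\mathsf{R}=\msfL^{-\top}\msfK\msfL^{-1}$. So your conjecture $\mathsf{R}=\tfrac12\tilde{\msfM}$ is the same identity with the change of basis absorbed, and your two remaining entry checks do go through (e.g.\ both sides of the $(1,1)$ identity equal $\lambda_1-\tfrac12\lambda_1^2+\lambda_2a_{21}/a_{11}$). Your appeal to congruence and Sylvester's law of inertia is also the correct justification, and it is more accurate than the paper's wording about similarity and equal spectra.
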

\begin{proof}
  Since both notions require that $b_i >0$ for $i=1,2$ we only need to consider the matrix $\tilde{\msfM}$ and the quadratic form $\mclQ_2$.

  Next we observe that
  \[
    \mclQ_2(w_1,w_2,w_3) = \mclK_2(w_2-w_1,w_3-w_2),
  \]
  where the quadratic form $\mclK_2 : H^2 \to \Real$ is defined as
  \[
    \mclK_2(\zeta_1,\zeta_2) \coloneqq \delta_1 \nrm{ \zeta_1}_H^2 + \delta_2 \nrm{ \zeta_2 }_H^2 - \frac{\nu_{12}}{a_{11}} \left( \zeta_1, \zeta_2 \right).
  \]
  We collect the coefficients in the symmetric matrix $\msfK \in \Real^{2 \times 2}$:
  \begin{equation*}
    \msfK =
    \begin{bmatrix}
    \delta_1 & -\frac{\nu_{12}}{2a_{11}} \\
    -\frac{\nu_{12}}{2a_{11}} & \delta_2
    \end{bmatrix}
  \end{equation*}
  and observe that $\mclQ_2$ is positive semi-definite if and only if the matrix $\msfK$ has the same property. The advantage of this change of variables now is that both $\msfK$ and $\tilde{\msfM}$ have the same dimensions, so we can make assertions about their spectrum.

  To link $\msfK$ and $\tilde{\msfM}$, we explicitly compute the entries of $\tilde{\msfM}$. It is immediate that
  \begin{equation*}
    \msfA^{-\top}\msfB =
    \begin{bmatrix}
      \frac{b_1}{a_{11}} & -\frac{a_{21}b_2}{a_{11}a_{22}} \\
      0 & \frac{b_2}{a_{22}}
    \end{bmatrix},
    \qquad \qquad
    \msfB\msfA^{-1} =
    \begin{bmatrix}
      \frac{b_1}{a_{11}} & 0 \\
      -\frac{a_{21}b_2}{a_{11}a_{22}} & \frac{b_2}{a_{22}}
    \end{bmatrix},
  \end{equation*}
  and
  \[
    \lambda_1 = \frac{b_1}{a_{11}} - \frac{a_{21}b_2}{a_{11}a_{22}}, \qquad \qquad \lambda_2 = \frac{b_2}{a_{22}}.
  \]

  Define $\bfone = [1, 1]^\top$ and
  \begin{equation*}
    \msfL \coloneqq
    \begin{bmatrix}
      1 & 0 \\
      1 & 1
    \end{bmatrix}
    =
    \left[ \bfone \ \bfe_2 \right] \in \Real^{2 \times 2},
  \end{equation*}
  where, as usual, we set $\bfe_2 = [0, 1]^\top$. Our next step is to compute $\frac{1}{2} \msfL^\top \tilde{\msfM} \msfL$. Since
  \begin{equation*}
    \frac{1}{2} \msfL^\top \tilde{\msfM} \msfL = \frac{1}{2}
    \begin{bmatrix}
      \bfone^\top \tilde{\msfM} \bfone & \bfone^\top \tilde{\msfM} \bfe_2 \\
      \bfe_2^\top \tilde{\msfM} \bfone & \bfe_2^\top \tilde{\msfM} \bfe_2
    \end{bmatrix}
  \end{equation*}
  we may use the explicit values of the entries of $\tilde{\msfM}$ derived above to obtain
  \[
    \frac{1}{2} \bfone^\top \tilde{\msfM} \bfone = \frac{1}{2} \left( \bfone^\top \msfA^{-\top}\msfB \bfone + \bfone^\top \msfB\msfA^{-1} \bfone - (\bfone^\top \bflambda)^2 \right) 
    = \frac{1}{2} \left( 2(\lambda_1 + \lambda_2) - (\lambda_1 + \lambda_2)^2 \right) = \delta_1.
  \]
  Similarly,
  \[
    \frac{1}{2} \bfe_2^\top \tilde{\msfM} \bfe_2 = \frac{1}{2} \left( \bfe_2^\top \msfA^{-\top}\msfB \bfe_2 + \bfe_2^\top \msfB \msfA^{-1} \bfe_2 - (\bfe_2^\top \bflambda)^2 \right) 
    = \frac{1}{2} \left( 2 \frac{b_2}{a_{22}} - \lambda_2^2 \right) = \frac{1}{2} \lambda_2(2 - \lambda_2) = \delta_2.
  \]
  For the off-diagonal term we have
  \begin{align*}
    \frac{1}{2} \bfone^\top \tilde{\msfM} \bfe_2 &= \frac{1}{2} \left( \bfone^\top \msfA^{-\top} \msfB \bfe_2 + \bfone^\top \msfB \msfA^{-1} \bfe_2 - (\bfone^\top \bflambda)(\bflambda^\top \bfe_2) \right).
  \end{align*}
  Next, using that
  \[
    \msfA^{-\top}\msfB \bfe_2 = \begin{bmatrix} -\frac{a_{21}b_2}{a_{11}a_{22}} \\ \frac{b_2}{a_{22}} \end{bmatrix},
    \qquad
    \msfB \msfA^{-1} \bfe_2 = \begin{bmatrix} 0 \\ \frac{b_2}{a_{22}} \end{bmatrix},
  \]
  we see that
  \[
    \frac{1}{2} \bfone^\top \tilde{\msfM} \bfe_2 = \lambda_2 \left( 1 - \frac{ a_{21} }{ a_{11} } \right).
  \]
  Finally, using the computed value of $\lambda_2$,
  \begin{align*}
    \frac{1}{2} \bfone^\top \tilde{\msfM} \bfe_2 = \frac{1}{2} \left[ -\lambda_2 \left( \frac{a_{21}}{a_{11}} + \lambda_1 + \lambda_2 - 2 \right) \right] = -\frac{\nu_{12}}{2a_{11}}.
  \end{align*}
  This proves that $\msfK = \frac{1}{2} \msfL^\top \tilde{\msfM} \msfL$ and so the matrices $\msfK$ and $\frac12\msfM$ have the same spectrum. In other words, $\msfK$ is positive semi-definite if and only if $\msfM$ is positive semi-definite.
\end{proof}

\subsection{The three stage case}

The result, and method of proof, for $s=3$ is the same. It is only more involved as the expressions are larger. As in \cite{MR4673496}, a computer algebra system may be useful in this situation.

\begin{theorem}[equivalence \AJS{for $s=3$}]
  For any $3$-stage DIRK scheme with an invertible matrix $\msfA$, $U$-stability is equivalent to $B$-stability.
\end{theorem}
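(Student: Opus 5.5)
We follow the two-stage argument. Since both definitions require $b_i>0$, it suffices to show that $\msfM$ is positive semi-definite if and only if $\mclQ_3$ is.

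\emph{Step 1: reduce $\mclQ_3$ to a $3\times 3$ form.} Each row of $\msfQ(3)$ sums to zero. For instance, the first row gives $\delta_1-\delta_1+\frac{\nu_{12}}{2a_{11}}\bigl(-1+1\bigr)+\frac{\nu_{13}}{2a_{11}}\bigl(-1+1\bigr)+\bigl(-\tfrac{\nu_{23}}{2a_{22}}+\tfrac{a_{21}\nu_{23}}{2a_{11}a_{22}}\bigr)\bigl(1-1\bigr)=0$, and the other rows are checked the same way. Hence $\mclQ_3$ vanishes on constant vectors $(w,w,w,w)$. Introduce the differences $\zeta_i=w_{i+1}-w_i$ for $i=1,2,3$, i.e.\ $w=\msfD\zeta+w_1\bfone$ with $\msfD$ the lower-triangular matrix of ones padded by a zero first row. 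Then
\[
\mclQ_3(w_1,\ldots,w_4)=\mclK_3(\zeta_1,\zeta_2,\zeta_3)=\sum_{i,j=1}^3 k_{ij}(\zeta_i,\zeta_j),
\]
with $\msfK$ obtained from $\msfQ(3)$ by summation by parts: $k_{ij}=\sum_{l>i,\,m>j} q(3)_{l,m}$ (indices shifted so that $\zeta_i$ pairs with $w_{i+1},\ldots,w_4$). We compute $\msfK$ explicitly. We expect $k_{ii}=\delta_i$, $k_{12}=-\frac{\nu_{12}}{2a_{11}}$, $k_{23}=-\frac{\nu_{23}}{2a_{22}}$, and $k_{13}$ a combination of $\nu_{13}/a_{11}$ and $a_{21}\nu_{23}/(a_{11}a_{22})$. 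Since the map $(w_1,\ldots,w_4)\mapsto(\zeta_1,\zeta_2,\zeta_3)$ is onto $H^3$, $\mclQ_3\ge0$ if and only if $\msfK$ is positive semi-definite.

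\emph{Step 2: congruence with $\tilde{\msfM}$.} Take
\[
\msfL=[\bfone\ \ \bfe_2+\bfe_3\ \ \bfe_3]=\begin{bmatrix}1&0&0\\1&1&0\\1&1&1\end{bmatrix},
\]
the natural analogue of the $s=2$ matrix: its columns are the tail indicators matching the $\delta_i$. The claim to verify is $\msfK=\frac12\msfL^\top\tilde{\msfM}\msfL$. Write $\bfl_j$ for the $j$-th column. Using $\bflambda=\msfA^{-\top}\bfb$, $\tilde{\msfM}=\msfA^{-\top}\msfB+\msfB\msfA^{-1}-\bflambda\bflambda^\top$, and the tail sums $\Lambda_j=\sum_{i\ge j}\lambda_i$, each entry reduces to
\[
\tfrac12\bfl_i^\top\tilde{\msfM}\bfl_j=\tfrac12\bigl(\bfl_i^\top\msfA^{-\top}\msfB\bfl_j+\bfl_j^\top\msfA^{-\top}\msfB\bfl_i-\Lambda_i\Lambda_j\bigr).
\]
For the diagonal entries, $\bfl_j^\top\msfA^{-\top}\msfB\bfl_j=\bfl_j^\top\msfA^{-\top}\bfb$ on the relevant support. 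This requires $\msfA^{-\top}\msfB\bfl_j$ to be concentrated appropriately, which follows from the lower triangular structure together with $\msfA^\top\bflambda=\bfb$, exactly as in the $s=2$ case. It yields $\frac12\Lambda_j(2-\Lambda_j)=\delta_j$.

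For the off-diagonal entries, we write out $\msfA^{-1}$ for the $3\times3$ lower triangular matrix, with entries $1/a_{ii}$, $-a_{21}/(a_{11}a_{22})$, $-a_{32}/(a_{22}a_{33})$, and $(a_{21}a_{32}-a_{22}a_{31})/(a_{11}a_{22}a_{33})$. We then substitute $b_3=a_{33}\lambda_3$, $b_2=a_{22}\lambda_2+a_{32}\lambda_3$ and $b_1=a_{11}\lambda_1+a_{21}\lambda_2+a_{31}\lambda_3$, as in the preceding lemma. The goal is to match the result with the $\nu_{12},\nu_{13},\nu_{23}$ expressions.

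\emph{Step 3: conclusion.} Since $\msfL$ and $\msfA$ are invertible, $\msfK=\frac12(\msfA^{-1}\msfL)^\top\msfM(\msfA^{-1}\msfL)$ is congruent to $\frac12\msfM$. By Sylvester's law of inertia, one is positive semi-definite if and only if the other is. (This is inertia, not equality of spectra, but it is what is needed.)

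\emph{Main obstacle.} The difficulty is the off-diagonal matching in Step 2, above all the $(1,3)$ entry, where $a_{31}$, the mixed term $a_{21}\nu_{23}/(a_{11}a_{22})$ and $\nu_{13}$ interact. If the naive $\msfL$ fails to match, we would first recheck the summation by parts in Step 1, since $\msfK$'s $(1,3)$ entry is exactly where a sign or coefficient error would enter. Next we would allow a unipotent lower-triangular correction to $\msfL$, solving for it entrywise. We would carry out these polynomial identities with a computer algebra system, as the paper suggests.
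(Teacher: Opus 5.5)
Your plan is the paper's proof: pass from $\msfQ(3)$ to $\msfK$ through the differences $w_{i+1}-w_i$, then show $\msfK=\frac12\msfL^\top\tilde{\msfM}\msfL$ with the same lower-triangular $\msfL$. The off-diagonal matching you left open does go through with this $\msfL$, with no correction needed. Your appeal to Sylvester's law of inertia is the right justification for the last step, and it is more accurate than the paper's ``same spectrum'' wording.

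The one slip is your guessed $k_{12}$. Your own summation-by-parts formula gives $k_{12}=-q(3)_{1,3}-q(3)_{1,4}=-\frac{\nu_{12}+\nu_{13}}{2a_{11}}$, not $-\frac{\nu_{12}}{2a_{11}}$, and it also gives $k_{13}=-q(3)_{1,4}$. These are the values that equal $\frac12\bfl_1^\top\tilde{\msfM}\bfl_2$ and $\frac12\bfl_1^\top\tilde{\msfM}\bfl_3$. The guessed $k_{12}$ would have produced a spurious mismatch.
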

\begin{proof}
  \AJS{As in the two-stage case, both notions require that, for $i=1,2,3$, $b_i>0$; so we focus on the spectra of matrices $\tilde{\msfM}$ and $\msfK$, which is to be defined}.
  
  We rewrite the quadratic form $\mclQ_3$ as
  \[
    \mclQ_3(w_1,w_2,w_3,w_4) = \mclK_3(w_2-w_1,w_3 - w_2, w_4 - w_3),
  \]
  where the coefficient matrix $\msfK$ has the following entries:
  \begin{align*}
    k_{11} &= \delta_1, \quad k_{22} = \delta_2, \quad k_{33} = \delta_3, \\
    k_{12} &= k_{21} = \frac{1}{2} \left( -\frac{\nu_{12}}{a_{11}} - \frac{\nu_{13}}{a_{11}} \right), \\
    k_{13} &= k_{31} = \frac{1}{2} \left( -\frac{\nu_{13}}{a_{11}} - \frac{\nu_{23}}{a_{22}} + \frac{a_{21}\nu_{23}}{a_{11}a_{22}} \right), \\
    k_{23} &= k_{32} = \frac{1}{2} \left( -\frac{\nu_{23}}{a_{22}} \right).
  \end{align*}

  Let us now compute the entries of $\tilde{\msfM}$. Direct computations yield:
  \begin{equation*}
    \msfA^{-\top}\msfB =
    \begin{bmatrix}
      \frac{b_1}{a_{11}} & -\frac{a_{21}b_2}{a_{11}a_{22}} & \frac{(-a_{22}a_{31}+a_{21}a_{32})b_3}{a_{11}a_{22}a_{33}} \\
      0 & \frac{b_2}{a_{22}} & -\frac{a_{32}b_3}{a_{22}a_{33}} \\
      0 & 0 & \frac{b_3}{a_{33}}
    \end{bmatrix},
    \qquad\qquad
    \msfB\msfA^{-1} =
    \begin{bmatrix}
      \frac{b_1}{a_{11}} & 0 & 0 \\
      -\frac{a_{21}b_2}{a_{11}a_{22}} & \frac{b_2}{a_{22}} & 0 \\
      \frac{(-a_{22}a_{31}+a_{21}a_{32})b_3}{a_{11}a_{22}a_{33}} & -\frac{a_{32}b_3}{a_{22}a_{33}} & \frac{b_3}{a_{33}}
    \end{bmatrix}.
  \end{equation*}
  Notice that the upper left $2 \times 2$ block is the same as in the two stage case. The row sums of $\msfA^{-\top}\msfB$ explicitly define the elements of the extrapolation vector $\bflambda$.

  We define the change-of-basis matrix
  \begin{equation*}
    \msfL \coloneqq
    \begin{bmatrix}
      1 & 0 & 0 \\
      1 & 1 & 0 \\
      1 & 1 & 1
    \end{bmatrix}
    = \begin{bmatrix}
        \bfl_1 & \bfl_2 & \bfl_3
      \end{bmatrix} \in \Real^{3 \times 3}.
  \end{equation*}
  We now compute $\frac{1}{2} \bfl_i^\top \tilde{\msfM} \bfl_i$:
  \begin{align*}
    \frac{1}{2} \bfl_3^\top \tilde{\msfM} \bfl_3 &= \frac{1}{2} \left( 2\left(\frac{b_3}{a_{33}}\right) - \lambda_3^2 \right) = \frac{1}{2}(2\lambda_3 - \lambda_3^2) = \delta_3 = k_{33}. \\
    \frac{1}{2} \bfl_2^\top \tilde{\msfM} \bfl_2 &= \frac{1}{2} \left( 2\left( \frac{b_2}{a_{22}} - \frac{a_{32}b_3}{a_{22}a_{33}} + \frac{b_3}{a_{33}} \right) - (\lambda_2+\lambda_3)^2 \right) = \frac{1}{2} \left( 2(\lambda_2+\lambda_3) - (\lambda_2+\lambda_3)^2 \right) = \delta_2 = k_{22}. \\
    \frac{1}{2} \bfl_1^\top \tilde{\msfM} \bfl_1 &= \frac{1}{2} \left( 2(\lambda_1+\lambda_2+\lambda_3) - (\lambda_1+\lambda_2+\lambda_3)^2 \right) = \delta_1 = k_{11},
  \end{align*}
  and we see that the diagonal entries of $\frac12\msfL^\top \tilde{\msfM} \msfL$ coincide with those of $\msfK$.
  
  We now evaluate the off-diagonal entries of $\frac12\msfL^\top \tilde{\msfM} \msfL$. To compute $\frac{1}{2} \bfl_2^\top \tilde{\msfM} \bfl_3$ we observe that the operation $\bfl_2^\top \msfA^{-\top} \msfB \bfl_3$ extracts the sum of the second and third rows of the third column of $\msfA^{-\top}\msfB$, which is 
  \[
    \frac{b_3}{a_{33}} - \frac{a_{32}b_3}{a_{22}a_{33}} = \lambda_3\left(1 - \frac{a_{32}}{a_{22}}\right).
  \]
  The transpose block gives $\bfl_2^\top \msfB \msfA^{-1} \bfl_3 = \frac{b_3}{a_{33}} = \lambda_3$. Gathering,
  \[
    \frac{1}{2} \bfl_2^\top \tilde{\msfM} \bfl_3 = \frac{1}{2} \left[ \lambda_3 \left(1 - \frac{a_{32}}{a_{22}}\right) + \lambda_3 - (\lambda_2 + \lambda_3)\lambda_3 \right] 
    = -\frac{1}{2a_{22}} \lambda_3 \left[ a_{32} + a_{22}(-2 + \lambda_2 + \lambda_3) \right] = -\frac{\nu_{23}}{2a_{22}} = k_{23}.
  \]

  We now compute $\frac{1}{2} \bfl_1^\top \tilde{\msfM} \bfl_2$. We have 
  \[
    \bfl_1^\top \msfA^{-\top} \msfB \bfl_2 = \lambda_2\left(1 - \frac{a_{21}}{a_{11}} \right) + \lambda_3\left(1 - \frac{a_{31}}{a_{11}} \right).
  \]
  The transpose block gives $\bfl_1^\top \msfB \msfA^{-1} \bfl_2 = \lambda_2 + \lambda_3$. We also have that
  \begin{align*}
    2 k_{12} &= \lambda_2\left(1 - \frac{a_{21}}{a_{11}}\right) + \lambda_3\left(1 - \frac{a_{31}}{a_{11}}\right) + (\lambda_2 + \lambda_3) - (\lambda_1+\lambda_2+\lambda_3)(\lambda_2+\lambda_3) \\
    &= -\frac{a_{21}}{a_{11}}\lambda_2 - \frac{a_{31}}{a_{11}}\lambda_3 + (\lambda_2+\lambda_3)\left[ 2 - \lambda_1 - \lambda_2 - \lambda_3 \right].
  \end{align*}
  To match the Bochner stability formulation, we expand $-\frac{\nu_{12}}{a_{11}} - \frac{\nu_{13}}{a_{11}}$ directly from their definitions:
  \begin{align*}
    -\frac{\nu_{12}}{a_{11}} - \frac{\nu_{13}}{a_{11}} &= -\frac{a_{21}}{a_{11}} \left[ \lambda_2(1-\lambda_3) + (2-\lambda_3)\lambda_3 + \lambda_3(-2+\lambda_2+\lambda_3) \right] \\
    &\quad - \left[ \lambda_2(-2+\lambda_1+\lambda_2) + 2(-1+\lambda_2)\lambda_3 + \lambda_3^2 \right] - \frac{a_{31}}{a_{11}}\lambda_3 - \lambda_1\lambda_3.
  \end{align*}
  The bracketed term factored by $-\frac{a_{21}}{a_{11}}$ simplifies to $\lambda_2$. The remaining terms evaluate to $(\lambda_2+\lambda_3)[2 - \lambda_1 - \lambda_2 - \lambda_3] - \frac{a_{31}}{a_{11}}\lambda_3$, thus 
  \[
    \frac{1}{2} \bfl_1^\top \tilde{\msfM} \bfl_2 = \frac{1}{2} \left( -\frac{\nu_{12}}{a_{11}} - \frac{\nu_{13}}{a_{11}} \right) = k_{12}.
  \]

  Finally, we find $\frac{1}{2} \bfl_1^\top \tilde{\msfM} \bfl_3$. The term $\bfl_1^\top \msfA^{-\top} \msfB \bfl_3$ sums the entire third column of $\msfA^{-\top} \msfB$, yielding 
  \[
    \bfl_1^\top \msfA^{-\top} \msfB \bfl_3 = \lambda_3 \left[ 1 - \frac{a_{32}}{a_{22}} - \frac{a_{31}}{a_{11}} + \frac{a_{21}a_{32}}{a_{11}a_{22}} \right].
  \]
  The transpose block gives $\lambda_3$. Therefore,
  \[
    2 k_{13} = \lambda_3 \left[ 2 - \frac{a_{31}}{a_{11}} - \frac{a_{32}}{a_{22}} + \frac{a_{21}a_{32}}{a_{11}a_{22}} \right] - (\lambda_1+\lambda_2+\lambda_3)\lambda_3 
    = \lambda_3 \left[ 2 - \lambda_1 - \lambda_2 - \lambda_3 - \frac{a_{31}}{a_{11}} - \frac{a_{32}}{a_{22}} + \frac{a_{21}a_{32}}{a_{11}a_{22}} \right].
  \]
  We expand $-\frac{\nu_{13}}{a_{11}} - \frac{\nu_{23}}{a_{22}} + \frac{a_{21}\nu_{23}}{a_{11}a_{22}}$ from their definitions to match:
  \begin{align*}
    -\frac{\nu_{13}}{a_{11}} - \frac{\nu_{23}}{a_{22}} + \frac{a_{21}\nu_{23}}{a_{11}a_{22}} &= \lambda_3 \left[ -\frac{a_{31}}{a_{11}} - \lambda_1 - \frac{a_{21}}{a_{11}}(-2+\lambda_2+\lambda_3) \right] \\
    &\quad + \lambda_3 \left[ -\frac{a_{32}}{a_{22}} + 2 - \lambda_2 - \lambda_3 \right] + \lambda_3 \left[ \frac{a_{21}}{a_{11}}\left( \frac{a_{32}}{a_{22}} - 2 + \lambda_2 + \lambda_3 \right) \right].
  \end{align*}
  Grouping the terms factored by $\frac{a_{21}}{a_{11}}$ gives us $\frac{a_{21}a_{32}}{a_{11}a_{22}}$. The remaining terms match the expansion above, hence $\frac{1}{2} \bfl_1^\top \tilde{\msfM} \bfl_3 = k_{13}$.
  
  We have proved that $\msfK = \frac{1}{2} \msfL^\top \tilde{\msfM} \msfL$, and this shows that $\msfK$ and $\frac12\msfM$ have the same spectrum. One of these matrices is positive semi-definite, if and only if the other one is.
\end{proof}

\section{Convergence of $U$-stable DIRK schemes for linear evolution problems}
\label{sec:ConvLinear}

As a first application of the equivalence we presented here we prove the convergence under minimal regularity assumptions for problems with a linear coercive operator. Thus we further assume that the operator $\mclA$ is linear, and we set $p=q=2$. Notice that, in this case the function \AJS{$\mclM$} is actually a constant, and the local solvability assumption holds for every $\gamma>0$.

The main idea will be to construct, from the sequence of stages $\{v_{n,i}\}_{i=1,n=1}^{s,\mclN}$ and updates $\{u_n\}_{n=0}^\mclN$, several families of functions that will converge, when $\bftau \to \bfzero$, to the solution of \eqref{eq:VarSolution}. 

\subsection{Time discrete functions}

Let us now describe how, out of sequences, we may construct time dependent functions. In what follows $X$ is a Banach space.

Given $\{w_n\}_{n=0}^\mclN \subset X$ we define its piecewise constant approximation $\bar{w}_\bftau \in L^\infty(0,T;X)$ as
\[
  \bar{w}_\bftau(t) \coloneqq w_{n+1}, \qquad t \in (t_n, t_{n+1}], \qquad n \in \{0, \ldots, \mclN-1\}.
\]
The piecewise linear interpolant $\hat{w}_\bftau \in C^{0,1}([0,T];X)$ is
\[
  \hat{w}_\bftau(t) \coloneqq w_n + \frac{ w_{n+1} - w_n }{\tau_n} \left( t -t_n \right), \qquad t \in I_n, \qquad n \in \{0, \ldots, \mclN-1\}.
\]
Note that
\[
  \diff_t \hat{w}_\bftau(t) = \frac{ w_{n+1} - w_n }{\tau_n}, \qquad t \in \mathring{I}_n, \qquad n \in \{0, \ldots, \mclN-1\}.
\]

Next, if $\{w_{n,i} \}_{i=1,n=0}^{s,\mclN-1} \subset X$, we define the function $\tilde{w}_\bftau \in L^\infty(0,T;X)$, such that $\tilde{w}_{\bftau|\mathring{I}_n} \in \mbbP_{s-1} \otimes X$ as
\[
  \tilde{w}_\bftau = \sum_{i=1}^s w_{n,i} \ell_{n,i}(t), \qquad t \in \mathring{I}_n, \qquad n \in \{0, \ldots, \mclN-1\},
\]
where the cardinal basis functions $\{\ell_{n,i} \}_{i=1}^s \subset \mbbP_{s-1}$ are defined as
\[
  \ell_{n,i}(t) = \prod_{j=1,j\neq i}^s \frac{ t-(t_n + c_j \tau_n) }{\tau_n(c_i-c_j)}.
\]

With this notation, we study the effect of some quadrature rules.

\begin{lemma}[quadrature]
\label{lem:quadrature}
  Let $s \in \{2, 3\}$, $\mclN \in \Natural$, and assume that the $s$-stage DIRK scheme is consistent to order $p=2(s-1) \leq 5$. Then, for any partition and all $n \in \{0, \ldots, \mclN-1\}$, we have
  \[
    \int_{t_n}^{t_{n+1}} q(t) \diff t = \tau_n \sum_{i=1}^s b_i q(t_n + c_i \tau_n), \qquad \forall q \in \mbbP_{2s-2}.
  \]
  As a consequence, for any $\{w_{n,i} \}_{i=1,n=0}^{s,\mclN-1} \subset X$ and all $n \in \{0, \ldots, \mclN-1\}$, we have
  \begin{equation}
  \label{eq:Integration}
    \int_{t_n}^{t_{n+1}} \tilde{w}_\bftau(t) \diff t = \tau_n \sum_{i=1}^s b_i w_{n,i}.
  \end{equation}
\end{lemma}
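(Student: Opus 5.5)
The plan is to reduce, by an affine change of variables, to a statement about quadrature on $[0,1]$ with nodes $\bfc$ and weights $\bfb$. Then I use the order conditions to get exactness on monomials. The vector-valued identity \eqref{eq:Integration} then follows from the interpolation property of the cardinal basis.

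\textbf{Step 1: reduce to $[0,1]$.} Fix $n$ and substitute $t = t_n + \theta\tau_n$, $\theta\in[0,1]$. Since $q\in\mbbP_{2s-2}$ if and only if $\theta\mapsto q(t_n+\theta\tau_n)$ is in $\mbbP_{2s-2}$, the claim is equivalent to
\[
  \int_0^1 r(\theta)\diff\theta = \sum_{i=1}^s b_i r(c_i), \qquad \forall r \in \mbbP_{2s-2}.
\]
By linearity it suffices to check the monomials $r(\theta)=\theta^k$ for $k=0,\ldots,2s-2$. This means showing $\bfb^\top \bfc^{k} = \frac{1}{k+1}$, where $\bfc^k$ is taken componentwise.

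\textbf{Step 2: the order conditions, case by case.}
\begin{itemize}
  \item For $s=2$ we need $k=0,1,2$. The relations $\bfb^\top\bfone=1$ and $\bfb^\top\bfc=\frac12$ are among the listed order conditions.
  \item For $k=2$ we need $\bfb^\top\bfc^2=\frac13$. This is the bushy-tree condition of order three. It is not in the paper's short list, so I must read "consistent to order $2(s-1)$" as including the full set of Butcher tree conditions. With $s=2$ the order is $2$, and the listed conditions only give $k\le1$.
  \item So I will interpret the hypothesis as: the $s$ nodes and weights form a quadrature rule exact on $\mbbP_{2s-2}$. This is the classical quadrature order condition $\bfb^\top\bfc^{k-1}=1/k$, $k\le 2s-1$, implied by the full RK order conditions of that order.
  \item For $s=3$ this requires $\bfb^\top\bfc^k=\frac1{k+1}$ for $k\le4$, which again comes from the corresponding bushy-tree conditions.
\end{itemize}
I expect this bookkeeping to be the main obstacle: matching the stated order to the degree of exactness the quadrature actually needs. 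I would state explicitly which tree conditions are used. If needed, I would strengthen the assumption to say the quadrature $(\bfb,\bfc)$ has order $2s-1$.

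\textbf{Step 3: the vector-valued identity.} On $\mathring I_n$, $\tilde w_\bftau=\sum_i w_{n,i}\ell_{n,i}$ with $\ell_{n,i}\in\mbbP_{s-1}\subset\mbbP_{2s-2}$. This requires the $c_i$ to be distinct, which is implicit in the definition of $\ell_{n,i}$. The scalar result applied to each $\ell_{n,i}$ gives
\[
  \int_{I_n}\ell_{n,i}\diff t=\tau_n\sum_j b_j\ell_{n,i}(t_n+c_j\tau_n)=\tau_n b_i,
\]
using $\ell_{n,i}(t_n+c_j\tau_n)=\delta_{ij}$. Since the Bochner integral commutes with finite sums of fixed vectors times scalar functions, $\int_{I_n}\tilde w_\bftau\diff t=\sum_i w_{n,i}\int_{I_n}\ell_{n,i}\diff t=\tau_n\sum_i b_i w_{n,i}$. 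Note that this last step only needs exactness on $\mbbP_{s-1}$, which is much weaker than the first assertion.
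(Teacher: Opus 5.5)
Your argument is essentially the paper's: you verify exactness on monomials through the conditions $\bfb^\top\bfc^k=\frac1{k+1}$, then integrate the cardinal functions $\ell_{n,i}$. The affine map to $[0,1]$ and the explicit use of $\ell_{n,i}(t_n+c_j\tau_n)=\delta_{ij}$ make your version cleaner than the paper's direct expansion of $(t_n+c_i\tau_n)^k$ and its one-line treatment of \eqref{eq:Integration}.

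The issue you raise in Step 2 is real, and the strengthening is not optional. Reading ``order $2(s-1)$'' as the full set of tree conditions does not help. The condition $\bfb^\top\bfc^{2s-2}=\frac1{2s-1}$ is the bushy tree with $2s-1$ vertices, which is one order too high.

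\begin{itemize}
\item The first assertion is in fact false as stated. The paper's own second-order, $B$-stable two-stage scheme with $\bfc=(\frac13,\frac56)^\top$ and $\bfb=(\frac23,\frac13)^\top$ has $\bfb^\top\bfc^2=\frac{11}{36}\neq\frac13$. So $q(t)=(t-t_n)^2$ is integrated as $\frac{11}{36}\tau_n^3$ instead of $\frac13\tau_n^3$.
\item The paper's proof hides this off-by-one. Its step labelled ``$p$'' checks $t^{p-1}$, so under the stated hypothesis it only reaches $\mbbP_{2s-3}$. The accompanying remark's range $k\le p$ should read $k\le p-1$. Your attribution of these conditions to the bushy trees is correct: they come from $y'=t^k$, not from $y'=y$, which yields the tall-tree conditions.
\item For $s=3$, your phrase ``implied by the full RK order conditions of that order'' is vacuous. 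The classical bound $p\le s+1$ for DIRK schemes rules out order $5$. The correct fix is the one you mention last: assume directly that the quadrature $(\bfb,\bfc)$ is exact on $\mbbP_{2s-2}$. This is attainable, e.g., by taking $(\bfb,\bfc)$ to be the three-point Gauss rule.
\end{itemize}

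As you observe, \eqref{eq:Integration} needs only exactness on $\mbbP_{s-1}$. That does follow from the stated hypothesis, since order $2(s-1)$ gives $\mbbP_{2s-3}\supseteq\mbbP_{s-1}$. The stronger property is what the paper genuinely needs later, in Proposition~\ref{prop:Limit}, where it integrates $\nrm{\tilde v_\bftau}_V^2\in\mbbP_{2s-2}$ exactly.
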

\begin{proof}
  Define the quadrature rule
  \[
    Q[q] \coloneqq \tau_n \sum_{i=1}^s b_i q(t_n + c_i \tau_n).
  \]
  
  Let $p=1$, then we have
  \[
    Q[1] = \tau_n \sum_{i=1}^s b_i = \tau_n \bfb^\top \bfone = \tau_n = \int_{t_n}^{t_{n+1}} 1 \diff t.
  \]
  
  Let now $p=2$,
  \[
    Q[t] = \tau_n \sum_{i=1}^s b_i (t_n + c_i \tau_n) = \tau_n \left( t_n + \tau_n \bfb^\top \bfc \right) = \tau_n \left( t_n + \frac{\tau_n}2 \right) = \int_{t_n}^{t_{n+1}} t \diff t,
  \]
  where we used the second order condition $\bfb^\top \bfc = \tfrac12$.
  
  If $p = 3$ we must have $\bfb^\top \bfc^2 =\tfrac13 $. Then,
  \begin{align*}
    Q[t^2] &= \tau_n \sum_{i=1}^s b_i (t_n + c_i \tau_n)^2 = \tau_n \sum_{i=1}^s b_i \left( t_n^2 + 2t_n c_i \tau_n + c_i^2 \tau_n^2  \right) 
    \\
    &= \tau_n \left( t_n^2  + t_n \tau_n + \tau_n^2 \sum_{i=1}^s b_i c_i^2 \right) = \int_{t_n}^{t_{n+1}} t^2 \diff t,
  \end{align*}
  where we, again, used the order conditions.

  Next, for $p =4$, one has $\bfb^\top \bfc^3 = \frac{1}{4}$, hence
   \begin{align*}
    Q[t^3] & = \tau_n \sum_{i=1}^s b_i (t_n + c_i \tau_n)^3 = \tau_n \sum_{i=1}^s b_i \left( t_n^3 + 3 t_n^2 c_i \tau_n + 3 t_n c_i^2 \tau_n^2 + c_i^3 \tau_n^3\right) \\
    & = \tau_n \left(t_n^3 + \frac{3}{2}t_n^2 \tau_n +t_n \tau_n^2 +\tau_n^3 \sum_{i=1}^s b_i c_i^3 \right) = \int_{t_n}^{t_{n+1}} t^3 \diff t
  \end{align*}
  where the order conditions were once again applied.
  
  For $p=5$, the order conditions give $\bfb^\top \bfc^4 = \frac{1}{5}$, thus
     \begin{align*}
    Q[t^4] & = \tau_n \sum_{i=1}^s b_i (t_n + c_i \tau_n)^4 = \tau_n \sum_{i=1}^s b_i \left( t_n^4 + 4 t_n^3 c_i \tau_n + 6 t_n^2 c_i^2 \tau_n^2 + 4 t_n c_i^3\tau_n^3 + c_i^4 \tau_n^4\right) \\
    & = \tau_n \left(t_n^4 + 2t_n^3 \tau_n + 2 t_n^2 \tau_n^2 + t_n \tau_n^3 +\tau_n^4 \sum_{i=1}^s b_i c_i^4 \right) = \int_{t_n}^{t_{n+1}} t^4 \diff t.
  \end{align*}
  
  Finally, if $s \in \{2,3\}$, we know that $s$-stage DIRK schemes are consistent to at least order $s$. It suffices then to observe that $\tilde{w}_{\bftau|\mathring{I}_n} \in \mbbP_{s-1}$.
\end{proof}

\begin{remark}[\AJS{order conditions}]
  In the proof of Lemma~\ref{lem:quadrature} we used the order conditions $\bfb^\top \bfc^k = \frac{1}{k+1}$, $k \leq p$. These come from applying the RK scheme to
  \[
    y' = y, \qquad y(0) = 1,
  \]
  and comparing the solution of the scheme with the coefficients of the Taylor expansions of the exact solution. One can show that the third order condition for the scheme $\bfb^\top \msfA \bfc = \frac{1}{6}$ is equivalent to the third order condition above, namely, $\bfb^\top \bfc^2 = \tfrac13$. Similar arguments show the equivalence of the fourth order conditions $\bfb^\top \msfA^2 \bfc = \frac{1}{24}$ and $\bfb^\top \bfc^3 =\frac{1}{4}$ and fifth order conditions $\bfb^\top \msfA^3 \bfc = \frac{1}{120}$ and $\bfb^\top \bfc^4 = \frac{1}{5}$.
\end{remark}

Let us now multiply \eqref{eq:Advance} by $\tau_n$ and use Lemma~\ref{lem:quadrature} to obtain, equivalently, that, for all $w \in V$, we have
\begin{equation}
\label{eq:AdvaceIntLoc}
  \int_{t_n}^{t_{n+1}} \left( \diff_t \hat{u}_\bftau(t), w \right) \diff t + \int_{t_n}^{t_{n+1}} \langle \mclA \tilde{v}_\bftau(t), w \rangle \diff t = \int_{t_n}^{t_{n+1}} \langle \tilde{f}_\bftau(t), w \rangle \diff t.
\end{equation}

So far, we have not specified how the sequence $\{f_{n,i} \}_{i=1,n=0}^{s,\mclN-1}$ is constructed. We do that now and draw conclusions from this choice.

\begin{proposition}[approximation of $f$]
\label{prop:FApprox}
  Let $f_\varepsilon \in C([0,T];V')$ be such that, as $\varepsilon \to 0$, $f_\varepsilon \to f$ in $L^2(0,T;V')$. Define, for $n \in \{0, \ldots, \mclN-1\}$ and $i \in \{1,\ldots, s\}$,
  \[
    f_{n,i} = f_\varepsilon(t_n + c_i \tau_n).
  \]
  Let $\varepsilon = \varepsilon(\bftau)$ be such that $\varepsilon \to 0$ as $\bftau \to \bfzero$. Then, as $\bftau \to \bfzero$, $\tilde{f}_\bftau \to f$ in $L^2(0,T;V')$.
\end{proposition}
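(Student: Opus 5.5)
The plan is to split the error with the triangle inequality and then handle the two pieces separately:
\[
  \nrm{ \tilde{f}_\bftau - f }_{L^2(0,T;V')} \leq \nrm{ \tilde{f}_\bftau - f_{\varepsilon(\bftau)} }_{L^2(0,T;V')} + \nrm{ f_{\varepsilon(\bftau)} - f }_{L^2(0,T;V')}.
\]
The second term tends to zero by hypothesis, since $\varepsilon(\bftau) \to 0$ as $\bftau \to \bfzero$. All the work is therefore in the first term, which is a pure interpolation error. On each $\mathring{I}_n$ the function $\tilde{f}_\bftau$ is the Lagrange interpolant in $\mbbP_{s-1} \otimes V'$ of the continuous function $f_\varepsilon$ at the nodes $t_n + c_i \tau_n$.

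For the interpolation error I would use two facts about the cardinal basis functions $\ell_{n,i}$.
\begin{itemize}
  \item After the affine change of variables $t = t_n + \theta \tau_n$, they become fixed polynomials in $\theta \in [0,1]$ that depend only on $\bfc$. Hence $\Lambda \coloneqq \max_{\theta \in [0,1]} \sum_{i} |\ell_i(\theta)|$ is a constant independent of $n$ and of the partition. (This assumes the $c_i$ are distinct, which the definition of $\ell_{n,i}$ already requires.)
  \item Since $\sum_i \ell_{n,i} \equiv 1$, for $t \in \mathring{I}_n$ we can write
  \[
    \tilde{f}_\bftau(t) - f_\varepsilon(t) = \sum_{i=1}^s \ell_{n,i}(t) \bigl( f_\varepsilon(t_n + c_i \tau_n) - f_\varepsilon(t) \bigr).
  \]
\end{itemize}
One caveat on the second fact: if some $c_i \notin [0,1]$, $f_\varepsilon$ has to be extended continuously outside $[0,T]$, for instance by a constant. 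Combining the two facts gives, with $\omega_\varepsilon$ the modulus of continuity of $f_\varepsilon$ in $V'$ and $C$ depending only on $\bfc$,
\[
  \nrm{ \tilde{f}_\bftau - f_\varepsilon }_{L^2(0,T;V')} \leq \sqrt{T}\, \Lambda\, \omega_{\varepsilon}\bigl( C \max_n \tau_n \bigr).
\]
For a fixed $\varepsilon$ the right-hand side tends to zero, because $f_\varepsilon$ is uniformly continuous on the compact interval.

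The main obstacle is passing to a coupled limit: we need $\omega_{\varepsilon(\bftau)}(C\max_n \tau_n) \to 0$. This does not follow from $f_\varepsilon \in C([0,T];V')$ alone unless the family $\{f_\varepsilon\}$ has moduli of continuity controlled along the coupling. I would first try to close it with a diagonal argument. For each fixed $\varepsilon$ the interpolation error vanishes as $\bftau \to \bfzero$, so the error is small whenever $\varepsilon(\bftau)$ is small and $\max_n \tau_n$ is small relative to the modulus of $f_{\varepsilon(\bftau)}$. This makes clear that the argument really requires $\varepsilon(\bftau)$ not to tend to zero too fast relative to the regularity of $f_\varepsilon$. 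That condition is automatic in the standard construction, where $f_\varepsilon$ is a mollification with $\omega_\varepsilon(h) \lesssim h\, \varepsilon^{-1/2} \nrm{f}_{L^2(0,T;V')}$, as long as $\max_n \tau_n\, \varepsilon(\bftau)^{-1/2} \to 0$.

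I also do not see a route that avoids some modulus control. Point evaluation is not $L^2$-stable, so the bound $\nrm{\tilde{f}_\bftau}_{L^2} \lesssim \nrm{f_\varepsilon}_{L^2}$ that would be needed to run a density argument with $\varepsilon$ frozen is not available. In particular, the argument does not establish the claim for a completely arbitrary coupling with $\varepsilon(\bftau)\to 0$. If I were writing the proof, I would state explicitly that the coupling is understood to satisfy $\omega_{\varepsilon(\bftau)}(C\max_n\tau_n) \to 0$, and verify this for the mollifier choice.
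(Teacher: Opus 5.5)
Your argument is the paper's argument. Both split off $\|f-f_\varepsilon\|_{L^2(0,T;V')}$, use $\sum_i \ell_{n,i}\equiv 1$ to write $\tilde f_\bftau-f_\varepsilon$ on $\mathring{I}_n$ as a combination of increments of $f_\varepsilon$, bound the $\ell_{n,i}$ by a constant depending only on $\bfc$, and invoke uniform continuity of $f_\varepsilon$.

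Your reservation about the coupled limit is justified, and it points to a gap in the paper's own proof, not in yours. The $\delta$ the paper extracts from uniform continuity depends on $f_\varepsilon$. So its concluding bound $\|f_\varepsilon-\tilde f_\bftau\|_{L^2(0,T;V')}^2<C^2T\varepsilon$ only holds when $\max_n\tau_n<\delta(\varepsilon(\bftau))$. That is a coupling condition of exactly the kind you impose, namely $\omega_{\varepsilon(\bftau)}(C\max_n\tau_n)\to 0$.

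For an arbitrary coupling the proposition is in fact false. Take $V=H=\Real$, $f\equiv 0$, distinct $c_i\in(0,1)$, and uniform steps $\tau=T/\mclN$. For $\varepsilon=1/\mclN$, let $f_\varepsilon$ be continuous with values in $[0,1]$, equal to $1$ at every node $t_n+c_i\tau$, and vanishing outside intervals of length $\tau/\mclN$ centred at the nodes. Set $f_\varepsilon\equiv 0$ for all other $\varepsilon$. Then $\|f_\varepsilon\|_{L^2(0,T)}^2\le s\tau=sT\varepsilon\to 0$. But with the coupling $\varepsilon(\bftau)=1/\mclN$, every $f_{n,i}$ equals $1$, so $\tilde f_\bftau\equiv 1$ and $\|\tilde f_\bftau-f\|_{L^2(0,T)}=\sqrt{T}$. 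The result therefore holds only for a suitably slow coupling, which always exists by a diagonal choice. Stating that hypothesis explicitly, as you propose, is the correct repair.

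Your extension caveat is also well taken. The paper states its continuity estimate for $t_1,t_2\in I_n$ but applies it with $t_1=t_n+c_i\tau_n$. That point lies outside $I_n$ whenever $c_i\notin[0,1]$, and outside $[0,T]$ for $n=0$ or $n=\mclN-1$. This happens for the paper's own scheme with $\gamma=1+\frac{\sqrt{2}}{2}$.

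One correction to your mollifier remark. With $\rho_\varepsilon(t)=\varepsilon^{-1}\rho(t/\varepsilon)$ one has $\|\rho_\varepsilon'\|_{L^2(\Real)}=\varepsilon^{-3/2}\|\rho'\|_{L^2(\Real)}$. Hence the modulus of continuity of $\rho_\varepsilon * f$ is bounded by $h\,\varepsilon^{-3/2}\|\rho'\|_{L^2(\Real)}\|f\|_{L^2(0,T;V')}$, not by $h\,\varepsilon^{-1/2}\|f\|_{L^2(0,T;V')}$. Your sufficient condition should therefore read $\max_n\tau_n\,\varepsilon(\bftau)^{-3/2}\to 0$.
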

\begin{proof}
  We compute
  \[
    \| f - \tilde{f}_\bftau \|_{L^2(0,T;V')}^2 \leq 2\| f - f_\varepsilon \|_{L^2(0,T;V')}^2  + 2  \sum_{n = 0}^{\mclN-1} \int_{I_n} \| f_\varepsilon(t) - \tilde{f}_\bftau(t) \|_{V'}^2 \diff t,
  \]
  and focus on the second term. For $t \in I_n$, we have that, owing to the definition of $\{f_{n,i}\}_{i=1,n=0}^{s,\mclN-1}$,
  \[
    \tilde{f}_\bftau(t) - f_\varepsilon(t) = \sum_{i=1}^s \left[ f_\varepsilon(t_n + c_i \tau_n ) - f_\varepsilon(t) \right] \ell_{n,i}(t),
  \]
  where we used that $\sum_{i=1}^s \ell_{n,i}(t) = 1$. This gives,
  \[
    \int_{I_n} \| f_\varepsilon(t) - \tilde{f}_\bftau(t) \|_{V'}^2 \diff t \lesssim \sum_{i=1}^s \int_{I_n} |\ell_{n,i}(t)|^2 \| f_\varepsilon(t_n + c_i \tau_n ) - f_\varepsilon(t) \|_{V'}^2 \diff t.
  \]
  Since $f_\varepsilon \in C([0,T];V')$, it is uniformly continuous on $[0,T]$. In other words, there is $\delta>0$ such that, if
  \[
    \max_{n =0}^{\mclN-1} \tau_n < \delta,
  \]
  then, for every $n \in \{0, \ldots, \mclN-1\}$ and all $t_1, t_2 \in I_n$, we have
  \[
    \| f_\varepsilon(t_1) - f_\varepsilon(t_2) \|_{V'}^{\AJS{2}} < \varepsilon.
  \]
  Therefore,
  \begin{equation}
  \label{eq:LebesgueNumbers}
    \| f_\varepsilon - \tilde{f}_\bftau \|_{L^2(0,T;V')}^2 < \varepsilon \sum_{n=0}^{\mclN-1} \int_{I_n} |\ell_{n,i}|^2 \diff t.
  \end{equation}
  Observe now that, for $t \in I_n$,
  \[
    |\ell_{n,i}(t)| = \prod_{j=1,j\neq i}^s \frac{|t-(t_n + c_j \tau_n)|}{\tau_n |c_i - c_j|} \leq \prod_{j=1,j\neq i}^s \frac{\tau_n\max\{|c_j|,|1-c_j|\}}{\tau_n |c_i - c_j|} = \prod_{j=1,j\neq i}^s \frac{\max\{|c_j|,|1-c_j|\}}{|c_i - c_j|} \leq C,
  \]
  where this constant is independent of $\bftau$ and $\varepsilon$, as it only depends on the entries of the Butcher table. With this at hand we continue \eqref{eq:LebesgueNumbers} as
  \[
    \| f_\varepsilon - \tilde{f}_\bftau \|_{L^2(0,T;V')}^2 < C^2 \varepsilon \sum_{n=0}^{\mclN-1} \int_{I_n} \diff t = C^2T \varepsilon,
  \]
  which clearly implies convergence.
\end{proof}

\begin{remark}[choice of $f_\varepsilon$]
  In the context of Proposition~\ref{prop:FApprox}, suitable choices of $f_\varepsilon$ are, for instance, a globally continuous quasiinterpolant \cite{MR3702417}, or simply obtained by some sort of regularization like convolution with a standard mollifier.
\end{remark}

\subsection{Convergence}

Let us now show that a DIRK scheme that satisfies Proposition~\ref{prop:BochnerStable} is convergent. First of all, on the basis of Proposition~\ref{prop:BochnerStable}, we extract convergent subsequences.

%

\begin{proposition}[limit passage]
\label{prop:Limit}
  Let $s \in \{2,3\}$. Assume that the $s$-stage DIRK scheme \eqref{eq:Stages}--\eqref{eq:Advance} is consistent to order $2s-2$ and $U$-stable in the sense of Definition~\ref{def:Ustability}. Assume also that the sequence $\{f_{n,i}\}_{i=1,n=0}^{s,\mclN-1}$ is defined as in Proposition~\ref{prop:FApprox}. Then, for any family of partitions we have that
  \begin{equation}
  \label{eq:EnergyApriori}
    \nrm{\bar{u}_\bftau}_{L^\infty(0,T;H)}^2 + \nrm{\tilde{v}_\bftau}_{L^2(0,T;V)}^2 \lesssim \nrm{u_0}_H^2 + \nrm{ f }_{L^2(0,T;V')}^2,
  \end{equation}
  and
  \begin{equation}
  \label{eq:DerivApriori}
    \nrm{\diff_t \hat{u}_\bftau }_{L^2(0,T;V')} \lesssim \nrm{u_0}_H + \nrm{ f }_{L^2(0,T;V')},
  \end{equation}
  where the implied constants are independent of $\bftau$. Consequently, there is $u \in \mclW$ such that, as $\bftau \to \bfzero$ and up to subsequences,
  \begin{equation}
  \label{eq:limits}
    \begin{aligned}
      \bar{u}_\bftau &\rightharpoonup^* u, & \text{ in } L^\infty(0,T;H), &&
      \hat{u}_\bftau &\rightharpoonup^* u, & \text{ in } L^\infty(0,T;H), \\
      \tilde{v}_\bftau &\rightharpoonup u, & \text{ in } L^2(0,T;V), &&
      \diff_t\hat{u}_\bftau &\rightharpoonup \diff_t u, & \text{ in } L^2(0,T;V').
    \end{aligned}
  \end{equation}
\end{proposition}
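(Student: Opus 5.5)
The plan is to translate the discrete stability estimates of Proposition~\ref{prop:BochnerStable} into Bochner norm bounds for the interpolants, and then extract limits by weak-$*$ compactness. With $p=q=2$ and $\nu_i=b_i>0$ (by the quadrature weights lemma), \eqref{eq:BochnerStableGlobal} gives
\[
  \max_n \nrm{u_n}_H^2 + \sum_n \tau_n \sum_i b_i \nrm{v_{n,i}}_V^2 \lesssim \nrm{u_0}_H^2 + \sum_n \tau_n \sum_i b_i \nrm{f_{n,i}}_{V'}^2 .
\]
The first term bounds $\nrm{\bar u_\bftau}_{L^\infty(0,T;H)}$, and also $\nrm{\hat u_\bftau}_{L^\infty(0,T;H)}$, since $\hat u_\bftau$ is a convex combination of nodal values. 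For $\tilde v_\bftau$, note that $\nrm{\tilde v_\bftau}_V^2$ restricted to $I_n$ is bounded by $\sum_i |\ell_{n,i}|^2\nrm{v_{n,i}}_V^2$ times $s$. The $\ell_{n,i}$ are uniformly bounded, as in the proof of Proposition~\ref{prop:FApprox}, so $\int_{I_n}\nrm{\tilde v_\bftau}_V^2 \lesssim \tau_n\sum_i\nrm{v_{n,i}}_V^2 \lesssim \tau_n \sum_i b_i \nrm{v_{n,i}}_V^2$, where we use $\min_i b_i>0$. Alternatively, apply Lemma~\ref{lem:quadrature} directly: $\nrm{\tilde v_\bftau}_V^2$ is not polynomial, but it is a quadratic form in the $v_{n,i}$ with bounded coefficients, so the direct bound is simpler.

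The right-hand side is handled the same way in reverse. Since $f_{n,i}=f_\varepsilon(t_n+c_i\tau_n)$, I need $\sum_n\tau_n\sum_i b_i\nrm{f_{n,i}}_{V'}^2 \lesssim \nrm{f}_{L^2(0,T;V')}^2$ uniformly, and this is the main obstacle. Point values of $f_\varepsilon$ are not controlled by the $L^2$ norm in general. My first attempt is to use Proposition~\ref{prop:FApprox}: $\tilde f_\bftau\to f$ in $L^2(0,T;V')$, so $\nrm{\tilde f_\bftau}_{L^2(0,T;V')}\lesssim \nrm f_{L^2(0,T;V')}$ for $\bftau$ small. The nodal values $f_{n,i}$ are then recovered from $\tilde f_\bftau|_{I_n}\in\mbbP_{s-1}\otimes V'$ by norm equivalence on the finite-dimensional polynomial space on the reference interval, rescaled by $\tau_n$. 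This gives $\tau_n\sum_i\nrm{f_{n,i}}_{V'}^2\lesssim\int_{I_n}\nrm{\tilde f_\bftau}_{V'}^2$ with a constant depending only on $\bfc$ (the $c_i$ are distinct, which the Lagrange basis requires). Summing yields \eqref{eq:EnergyApriori}, possibly only for $\bftau$ small enough, which suffices for the limit passage.

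For \eqref{eq:DerivApriori}, use \eqref{eq:DualNormBound} with $q'=2$. It directly gives $\nrm{\diff_t\hat u_\bftau}_{L^2(0,T;V')}^2=\sum_n\tau_n\nrm{(u_{n+1}-u_n)/\tau_n}_{V'}^2$, bounded by the same data term. Alternatively, test \eqref{eq:Advance} with $w$ and use linear boundedness of $\mclA$: $\nrm{(u_{n+1}-u_n)/\tau_n}_{V'}\le \sum_i b_i(\nrm{\mclA}\nrm{v_{n,i}}_V+\nrm{f_{n,i}}_{V'})$. Squaring and summing then reuses the bounds already obtained.

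For the compactness step, Banach--Alaoglu (the spaces are duals of separable spaces, and $L^2(0,T;V)$ is reflexive when $V$ is, or weak-$*$ otherwise) gives subsequences with $\bar u_\bftau\rightharpoonup^*\bar u$, $\hat u_\bftau\rightharpoonup^*\hat u$, $\tilde v_\bftau\rightharpoonup v$, and $\diff_t\hat u_\bftau\rightharpoonup g$. The identifications go as follows. First, $g=\diff_t\hat u$ by passing to the limit in the distributional definition of the derivative. Second, $\bar u=\hat u$ because $\hat u_\bftau-\bar u_\bftau=(t-t_{n+1})\diff_t\hat u_\bftau$ on $I_n$, so $\nrm{\hat u_\bftau-\bar u_\bftau}_{L^2(0,T;V')}\le\max\tau_n\nrm{\diff_t\hat u_\bftau}_{L^2(0,T;V')}\to0$. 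Third, $v=\hat u$: the extrapolation identity \eqref{eq:Extrapolation} and the stage equations \eqref{eq:Stages} give $v_{n,i}-u_n=\tau_n\sum_j a_{ij}(f_{n,j}-\mclA v_{n,j})$ in $V'$. Hence $\int\nrm{\tilde v_\bftau-\bar u_\bftau}_{V'}^2\lesssim \max\tau_n^2(\ldots)$, again after handling the shift $u_n$ versus $u_{n+1}$ by the derivative bound, and this tends to $0$. All limits therefore coincide with some $u\in L^2(0,T;V)$ having $\diff_tu\in L^2(0,T;V')$, that is, $u\in\mclW$.
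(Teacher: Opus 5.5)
Your proposal is correct. It has the same skeleton as the paper's proof: the global bound \eqref{eq:BochnerStableGlobal} with $\nu_i=b_i$, Proposition~\ref{prop:FApprox} for the forcing, \eqref{eq:DualNormBound} for $\diff_t\hat u_\bftau$, weak compactness, and identification of the limits through the stage equations and the distributional derivative. Two steps, however, are carried out differently.

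The first is the bound on $\nrm{\tilde v_\bftau}_{L^2(0,T;V)}$ and on the forcing sum. The paper uses Lemma~\ref{lem:quadrature} to obtain the exact identity $\sum_n\tau_n\sum_i b_i\nrm{v_{n,i}}_V^2=\nrm{\tilde v_\bftau}_{L^2(0,T;V)}^2$, and likewise for $f$. This is the only place the order-$(2s-2)$ hypothesis enters, and it tacitly needs $t\mapsto\nrm{\tilde v_\bftau(t)}_V^2$ to be a polynomial, i.e.\ inner-product norms. You instead use two-sided inequalities: uniform bounds on the $\ell_{n,i}$ together with $\min_i b_i>0$ in one direction, and an inverse estimate in the other. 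These give up the equality but need neither that consistency order nor Hilbertian norms. Since $\mbbP_{s-1}\otimes V'$ is not finite-dimensional, justify the inverse estimate by applying the scalar one to $t\mapsto\langle\tilde f_\bftau(t),\phi\rangle$ and taking the supremum over $\nrm{\phi}_V\le1$; the constant then depends only on $\bfc$.

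The second is the identification of the limits. You show that $\hat u_\bftau-\bar u_\bftau\to0$ and $\tilde v_\bftau-\bar u_\bftau\to0$ strongly in $L^2(0,T;V')$. For this you use $\hat u_\bftau-\bar u_\bftau=(t-t_{n+1})\diff_t\hat u_\bftau$ and $v_{n,i}-u_n=\tau_n\sum_j a_{ij}(f_{n,j}-\mclA v_{n,j})$. The paper instead compares $\tilde v_\bftau$ separately with $\hat u_\bftau$ and with $\bar u_\bftau$ via \eqref{eq:Extrapolation}, tests against products $\phi\psi$, and concludes by density. Your version is shorter, gives strong rather than weak convergence of the differences, and does not actually need \eqref{eq:Extrapolation}.

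The small-$\bftau$ restriction you flag in the forcing bound is equally present in the paper's appeal to Proposition~\ref{prop:FApprox}. Your ``weak-$*$ otherwise'' caveat is unnecessary: since $\mclA$ is linear, bounded and coercive, the symmetric part of $\langle\mclA\cdot,\cdot\rangle$ is an equivalent inner product on $V$, so $V$ is reflexive.
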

\begin{proof}
  \AJS{We begin by recalling that, given our standing assumptions, we have estimate \eqref{eq:BochnerStableGlobal}.}
%
%
  Next, for every $n$, $\tilde{v}_{\bftau|\mathring{I}_n} \in \mbbP_{s-1} \otimes V$ and $\tilde{f}_{\bftau|\mathring{I}_n} \in \mbbP_{s-1} \otimes V'$, so that $\nrm{\tilde{v}_{\bftau|\mathring{I}_n}}_V^2,\nrm{ \tilde{f}_{\bftau|\mathring{I}_n} }_{V'}^2 \in \mbbP_{2s-2}$. Then, since we have assumed that the DIRK scheme is consistent to order $2(s-1)$, Lemma~\ref{lem:quadrature} implies that
  \begin{align*}
    \sum_{n=1}^{\mclN} \tau_n \sum_{i=1}^s b_i \nrm{ v_{n,i} }_V^2 &= \int_0^T \nrm{ \tilde{v}_\bftau }_V^2 \diff t,
    \\
    \sum_{n=1}^{\mclN} \tau_n \sum_{i=1}^s b_i \nrm{ f_{n,i} }_{V'}^2 &= \int_0^T \nrm{ \tilde{f}_\bftau }_{V'}^2 \diff t \lesssim  \| f \|_{L^2(0,T;V')}^2,
  \end{align*}
  where, to obtain the upper bound, we invoked Proposition~\ref{prop:FApprox}. 
  Since $\bar{u}_\bftau$ is piecewise constant in time, the previous considerations imply \eqref{eq:EnergyApriori}. \AJS{Similarly, \eqref{eq:DerivApriori} is a rewriting of \eqref{eq:DualNormBound}.}

  Having obtained \eqref{eq:EnergyApriori} and \eqref{eq:DerivApriori} we may extract a (not relabeled) subsequence and find $\bar{u},\hat{u} \in L^\infty(0,T;H)$, $u \in L^2(0,T;V)$, and $w \in L^2(0,T;V')$ such that
  \begin{align*}
    \bar{u}_\bftau &\rightharpoonup^* \bar{u}, & \text{ in } L^\infty(0,T;H), &&
    \hat{u}_\bftau &\rightharpoonup^* \hat{u}, & \text{ in } L^\infty(0,T;H), \\
    \tilde{v}_\bftau &\rightharpoonup u, & \text{ in } L^2(0,T;V), &&
    \diff_t\hat{u}_\bftau &\rightharpoonup w, & \text{ in } L^2(0,T;V'),
  \end{align*}
  where we also used the fact that $\nrm{\hat{u}_\bftau }_{L^\infty(0,T;H)} = \nrm{\bar{u}_\bftau }_{L^\infty(0,T;H)}$. We must now show that, in fact, $u = \bar{u} = \hat{u}$ and $\diff_t u = w$.

  To achieve this, we first show that $\tilde{v}_\bftau - \hat{u}_\bftau \to 0$ in $L^2(0,T;V')$. Let $n$ be arbitrary and we recall the extrapolation identity \eqref{eq:Extrapolation} to write, for $t \in \mathring{I}_n$,
  \[
    \hat{u}_\bftau(t) - \tilde{v}_\bftau(t) = \sum_{i=1}^s (v_{n,i} - u_n) \left( \lambda_i (1-\alpha_n(t)) - \ell_{n,i}(t) \right),
  \]
  where we have denoted $\alpha_n(t) = \tfrac{t -t_n}{\tau_n}$. Let now $\psi \in L^2(0,T)$ be arbitrary and compute
  \begin{align*}
    \mclC_n(\psi) &\coloneqq \left| \int_{I_n} \psi(t) \left( \lambda_i (1-\alpha_n(t)) - \ell_{n,i}(t) \right) \diff t \right| \\
      &\leq \nrm{ \psi }_{L^2(I_n)} \left( \int_{I_n} \left( \lambda_i (1-\alpha_n(t)) - \ell_{n,i}(t) \right)^2 \diff t \right)^{1/2}.
  \end{align*}
  A simple change of variable reveals that
  \[
    \int_{I_n} \left| \lambda_i (1-\alpha_n(t)) - \ell_{n,i}(t) \right|^2 \diff t = \tau_n \int_0^1 \left|\lambda_i (1-r) - \prod_{j\neq i} \frac{r-c_j}{c_i-c_j} \right|^2 \diff r,
  \]
  and so for a constant $M$, that depends only on the entries of the Butcher table,
  \[
    \mclC_n(\psi) \leq M \tau_n^{1/2} \nrm{ \psi }_{L^2(I_n)}.
  \]
  Next, for $\phi \in V$  and $\psi \in L^2(0,T)$,
  \begin{align*}
    \left|\int_0^T \langle \hat{u}_\bftau - \tilde{v}_\bftau, \phi \psi \rangle \diff t \right| &\leq \sum_{n=0}^{\mclN-1} \sum_{i=1}^s \mclC_n(\psi) | \langle v_{n,i} - u_n, \phi \rangle|
    \\
    &\leq M \max_{n=0}^{\mclN-1} \tau_n^{1/2} \sum_{n=0}^{\mclN-1} \tau_n \nrm{\psi}_{L^2(I_n)} \sum_{i=1}^s \sum_{j=1}^i |a_{ij}| \left| \langle f_{n,j} - \mclA v_{n,j}, \phi \rangle \right|
    \\
    &\leq
    \check{M} \max_{n=0}^{\mclN-1} \sqrt{\tau_n} \sum_{n=0}^{\mclN-1} \tau_n \nrm{\psi}_{L^2(I_n)} \nrm{\phi}_V \sum_{i=1}^s \left[ \nrm{f_{n,i} }_{V'} + \nrm{\mclA v_{n,i} }_{V'} \right],
    \\
    \leq &\check{M} \max_{n=0}^{\mclN-1} \tau_n \nrm{\psi}_{L^2(0,T)} \nrm{\phi}_V \mclX,
  \end{align*}
  with $\check{M} = M \max_{i=1}^s \sum_{j=1}^i |a_{ij}|$ and
  \[
    \mclX = \left( \sum_{n=0}^{\mclN-1} \tau_n \left[ \sum_{i=1}^s \left[ \nrm{f_{n,i} }_{V'} + \nrm{\mclA v_{n,i} }_{V'} \right] \right]^2 \right)^{1/2}.
  \]
  Now, since $\{b_i\}_{i=1}^s \subset (0,1)$, letting $B = \min_{i=1}^s b_i^{1/2}$ we bound $\mclX$ as
  \[
    \mclX \leq \frac{s^{1/2}} B \left[ \sum_{n=0}^{\mclN-1} \tau_n \sum_{i=1}^s b_i  \left( \nrm{f_{n,i} }_{V'}^2 + \nrm{\mclA v_{n,i} }_{V'}^2 \right)  \right]^{1/2}
    \lesssim \left( \nrm{f}_{L^2(0,T;V')}^2 + \nrm{ \tilde{v}_\bftau }_{L^2(0,T;V)}^2 \right)^{1/2},
  \]
  where we also used the boundedness of $\mclA$ and the convergence of Proposition~\ref{prop:FApprox}. Since linear combinations of terms of the form $\phi\psi$ are dense in $L^2(0,T;V)$ the limit follows.

  We now show that $\tilde{v}_\bftau - \bar{u}_\bftau \to 0$ in $L^2(0,T;V')$ as well. We use, once again, the extrapolation identity \eqref{eq:Extrapolation} to see that, for any $n$ and all $t \in \mathring{I}_n$,
  \[
    \tilde{v}_\bftau(t) - \bar{u}_\bftau(t) = \sum_{i=1}^s (v_{n,i} - u_n) (\ell_{n,i}(t) - \lambda_i).
  \]
  A similar argument as before shows that, for $\psi \in L^2(0,T)$,
  \[
    \mclD_n \coloneqq \left| \int_{I_n} \psi(t) (\ell_{n,i}(t) - \lambda_i) \diff t \right| \leq M \tau_n^{1/2} \nrm{\psi}_{L^2(I_n)}.
  \]
  Let now $\phi \in V$. Similar computations to the ones presented above give
  \begin{align*}
    \left|\int_0^T \langle \tilde{v}_\bftau - \bar{u}_\bftau, \phi \psi \rangle \diff t \right| &\leq \check{M} \max_{n=0}^{\mclN-1} \tau_n \nrm{\psi}_{L^2(0,T)} \nrm{\phi}_V \left( \nrm{f}_{L^2(0,T;V')}^2 + \nrm{ \tilde{v}_\bftau }_{L^2(0,T;V)}^2 \right)^{1/2}.
  \end{align*}

  The previous limits show that $u = \hat{u} = \bar{u}$ in $L^2(0,T;V')$.

  Finally, for $\varphi \in C_0^\infty(0,T,V)$, we consider, as $\bftau \to \bfzero$,
  \[
    \int_0^T \langle \diff_t \hat{u}_\bftau, \varphi \rangle \diff t = -\int_0^T ( \hat{u}_\bftau, \diff_t \varphi ) \diff t \to -\int_0^T ( u, \diff_t \varphi ) \diff t = \int_0^T \langle \diff_t u, \varphi \rangle \diff t.
  \]
  On the other hand,
  \[
    \int_0^T \langle \diff_t \hat{u}_\bftau, \varphi \rangle \diff t \to \int_0^T \langle w, \varphi \rangle \diff t.
  \]
  Consequently, $\diff_t u = w \in L^2(0,T;V')$.

  All the statements have been proved.
\end{proof}

The convergence result now follows.

\begin{theorem}[convergence]
\label{thm:convergence}
  In the setting of Proposition~\ref{prop:Limit} we have that the function $u \in \mclW$ solves \eqref{eq:VarSolution}. In addition, for any partition, as $\bftau \to \bfzero$, we have that the whole family satisfies \eqref{eq:limits}.
\end{theorem}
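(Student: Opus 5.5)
We pass to the limit in the integrated advance relation \eqref{eq:AdvaceIntLoc}, identify the initial condition, and then use uniqueness to upgrade subsequential convergence to convergence of the whole family.

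\emph{Passing to the limit in the equation.} Sum \eqref{eq:AdvaceIntLoc} over $n$ after multiplying by values of a scalar test function. Concretely, fix $\phi \in V$ and $\psi \in C^1([0,T])$, and let $\bar\psi_\bftau$ be the piecewise constant function equal to $\psi(t_n)$ on $I_n$. Testing \eqref{eq:AdvaceIntLoc} with $w=\psi(t_n)\phi$ and summing gives
\[
  \int_0^T \langle \diff_t \hat u_\bftau + \mclA \tilde v_\bftau - \tilde f_\bftau , \phi \rangle \bar\psi_\bftau \diff t = 0 .
\]
Since $\bar\psi_\bftau \to \psi$ uniformly, and $\diff_t\hat u_\bftau$, $\tilde v_\bftau$, $\tilde f_\bftau$ are bounded in $L^2$ by \eqref{eq:EnergyApriori}--\eqref{eq:DerivApriori}, we may replace $\bar\psi_\bftau$ by $\psi$ at a cost that tends to zero. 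The resulting identity passes to the limit using the convergences in \eqref{eq:limits}, Proposition~\ref{prop:FApprox}, and the fact that $\mclA$, being linear and bounded, induces a weakly continuous map $L^2(0,T;V)\to L^2(0,T;V')$; the last point gives $\mclA\tilde v_\bftau \rightharpoonup \mclA u$. Since finite sums $\sum \psi_k\phi_k$ are dense in $L^2(0,T;V)$, we obtain \eqref{eq:VarSolution}.

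\emph{Initial condition.} This is the main obstacle: the time derivative and its weak limit carry no information at $t=0$ by themselves, so the initial value must be recovered through a boundary term. Take $\psi \in C^1([0,T])$ with $\psi(T)=0$ and $\psi(0)=1$. Since $\hat u_\bftau(0)=u_0$ for every $\bftau$, integration by parts in time gives
\[
  \int_0^T \langle \diff_t\hat u_\bftau,\phi\rangle\psi \diff t = -(u_0,\phi) - \int_0^T (\hat u_\bftau,\phi)\psi' \diff t .
\]
Pass to the limit using $\diff_t\hat u_\bftau \rightharpoonup \diff_t u$ and $\hat u_\bftau \rightharpoonup^* u$. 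The same integration by parts holds for $u\in\mclW\hookrightarrow C([0,T];H)$ with boundary term $-(u(0),\phi)$. Comparing the two limits yields $(u(0)-u_0,\phi)=0$ for all $\phi\in V$, and density of $V$ in $H$ gives $u(0)=u_0$.

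\emph{Convergence of the whole family.} For linear coercive $\mclA$ with $p=q=2$ the solution of \eqref{eq:VarSolution} is unique. Indeed, the difference $e$ of two solutions satisfies $\tfrac12\nrm{e(t)}_H^2+\alpha\int_0^t\nrm{e}_V^2=0$ by the energy identity valid in $\mclW$, so $e=0. Therefore every subsequence of the family has, by Proposition~\ref{prop:Limit}, a further subsequence converging in the senses of \eqref{eq:limits} to the same limit $u$. The standard subsequence argument, applied to each weak or weak-$*$ topology after testing against a fixed element of the corresponding predual, then shows that the whole family satisfies \eqref{eq:limits}.
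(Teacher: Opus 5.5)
Your proof is correct. It follows the paper's overall plan: pass to the limit in \eqref{eq:AdvaceIntLoc}, recover $u(0)=u_0$, then use uniqueness and the subsequence principle. Two steps, however, are done differently.

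\emph{Limit passage in the equation.} You test with $\psi(t_n)\phi$, $\psi\in C^1([0,T])$, and conclude by density of $C^1([0,T])\otimes V$ in $L^2(0,T;V)$. The paper instead tests with the local averages $w_n=\tau_n^{-1}\int_{I_n}w$ of an arbitrary $w\in L^2(0,T;V)$, and pairs the weakly convergent residual against $\sum_n w_n\chi_n\to w$ strongly. The two arguments are equivalent; the paper's just avoids the density step.

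\emph{Initial condition.} This is the more substantive difference. The paper asserts $\hat u_\bftau\rightharpoonup u$ in $\mclW\hookrightarrow C([0,T];H)$. However, Proposition~\ref{prop:Limit} bounds $\tilde v_\bftau$ in $L^2(0,T;V)$, not $\hat u_\bftau$, and such a bound for $\hat u_\bftau$ can fail. By \eqref{eq:Extrapolation}, $u_n-(1-\bflambda^\top\bfone)^n u_0\in V$. So if $\bflambda^\top\bfone\neq 1$ and $u_0\in H\setminus V$, then $\hat u_\bftau(t)\notin V$ for a.e.\ $t$. This happens, for instance, for the second-order scheme with $\msfM=\msfO$ in Section~\ref{sec:GradFlows}, which has $\bflambda=(-2,2)^\top$. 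Your integration-by-parts argument uses only $\diff_t\hat u_\bftau\rightharpoonup\diff_t u$ in $L^2(0,T;V')$ and $\hat u_\bftau\rightharpoonup^* u$ in $L^\infty(0,T;H)$. In effect it is the weak continuity of the trace at $t=0$ on $H^1(0,T;V')$. It is therefore the more robust route, and it properly closes this point.

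\emph{Small correction.} In the uniqueness step, the energy identity gives $\tfrac12\nrm{e(t)}_H^2+\int_0^t\langle\mclA e,e\rangle\,\diff t=0$. Coercivity then yields $\tfrac12\nrm{e(t)}_H^2+\alpha\int_0^t\nrm{e}_V^2\,\diff t\le 0$, an inequality rather than an equality. The conclusion $e=0$ is unaffected.
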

\begin{proof}
  The convergence of the whole family from subsequence convergence is due to uniqueness of solutions to \eqref{eq:VarSolution} and the so-called Urysohn's subsequence principle. Thus, it remains to prove that the limit $u$ solves \eqref{eq:VarSolution}.

  First, since $\hat{u}_\bftau(0) = u_0$, and $\hat{u}_\bftau \rightharpoonup u$ in $\mclW \hookrightarrow C([0,T];H)$, we have $u(0) = u_0$.

  Next, we let $w \in L^2(0,T;V)$ and define, for $n \in \{0, \ldots, \mclN-1\}$,
  \[
    w_n \coloneqq \frac1{\tau_n} \int_{I_n} w \diff t \in V.
  \]
  It is not difficult to show that, if $\chi_n$ denotes the characteristic of $\mathring{I}_n$ then, as $\bftau \to \bfzero$,
  \[
    \sum_{n=0}^{\mclN-1} w_n \chi_n \to w
  \]
  in $L^2(0,T;V)$. We may use \eqref{eq:AdvaceIntLoc} with $w=w_n$ to write
  \[
    \int_0^T \left\langle \diff_t \hat{u}_\bftau(t) + \mclA \tilde{v}_\bftau(t) - \tilde{f}_\bftau(t), \sum_{n=0}^{\mclN-1} \chi_{n}(t) w_n \right\rangle \diff t = 0.
  \]
  Since Propositions~\ref{prop:Limit} and~\ref{prop:FApprox} show that
  \[
    \diff_t \hat{u}_\bftau + \mclA \tilde{v}_\bftau - \tilde{f}_\bftau \rightharpoonup \diff_t u + \mclA u - f
  \]
  in $L^2(0,T;V')$, the result follows.
\end{proof}

\section{Some remarks on DIRK schemes for gradient flows}
\label{sec:GradFlows}

The study of DIRK schemes for gradient flows of the form \eqref{eq:EVI} is carried out, for instance, in \cite{MR2566077}; see also \cite{MR4159230,MR4007568}. The following result is \cite[Theorem 1]{MR2566077}.

\begin{theorem}[convergence]
  Let the DIRK scheme \eqref{eq:Stages}--\eqref{eq:Advance} be consistent to order at least one and $B$-stable. Assume that the time-step is uniform, that is, for $\mclN \in \Natural$, we have
  \[
    \tau \coloneqq \frac{T}\mclN, \qquad t_n = n \tau, \quad n = 0, \ldots, \mclN.
  \]
  Then, the piecewise constant reconstruction $\bar{u}_\bftau \in L^\infty(0,T;H)$ obtained from applying the DIRK scheme to approximate \eqref{eq:EVI} satisfies
  \[
    \lim_{\bftau \to \bfzero} \| u - \bar{u}_\bftau \|_{L^\infty(0,T;H)} = 0.
  \]
\end{theorem}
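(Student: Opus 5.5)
The theorem is \cite[Theorem 1]{MR2566077}, but I will outline how I would prove it within the present framework. The strategy is the classical one for the implicit Euler method: a discrete evolution variational inequality, a comparison with the exact solution, and a density argument in the initial datum.

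\textbf{Step 1: a discrete inequality from $B$-stability.} Write each stage as
\[
v_{n,i} = u_n - \tau\sum_{j\le i} a_{ij}\,\xi_{n,j}, \qquad \xi_{n,j}\in\partial\Phi(v_{n,j}),
\]
and the update as $u_{n+1} = u_n - \tau\sum_i b_i\xi_{n,i}$. These relations are well posed because $\Phi$ is convex and lower semicontinuous: each stage is a resolvent of the maximal monotone operator $\partial\Phi$, with $a_{ii}>0$.

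Fix $w\in\mclD(\Phi)$. Expand $\|u_{n+1}-w\|_H^2-\|u_n-w\|_H^2$ and insert the stage relations. This is the computation that defines the M-matrix~\eqref{eq:MMatrixBstab}, and it gives
\[
\tfrac12\|u_{n+1}-w\|_H^2-\tfrac12\|u_n-w\|_H^2 = -\tau\sum_i b_i(\xi_{n,i},v_{n,i}-w) - \tfrac{\tau^2}{2}\sum_{i,j}m_{ij}(\xi_{n,i},\xi_{n,j}).
\]
The last sum is nonnegative because $\msfM$ is positive semi-definite. For the first sum, the subdifferential inequality gives $(\xi_{n,i},v_{n,i}-w)\ge\Phi(v_{n,i})-\Phi(w)$, and $b_i>0$ lets us keep the sign. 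Hence
\[
\tfrac12\|u_{n+1}-w\|_H^2-\tfrac12\|u_n-w\|_H^2+\tau\sum_i b_i\bigl(\Phi(v_{n,i})-\Phi(w)\bigr)\le 0.
\]
This is a discrete version of~\eqref{eq:EVI}. The same expansion, applied to the difference of two discrete solutions and using monotonicity of $\partial\Phi$, shows that the one-step map is nonexpansive in $H$:
\[
\|u_{n+1}-\tilde u_{n+1}\|_H\le\|u_n-\tilde u_n\|_H .
\]

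\textbf{Step 2: error estimate for regular data.} Take $u_0\in\mclD(\partial\Phi)$, so that $\diff_t u\in L^\infty(0,T;H)$. Test the continuous inequality~\eqref{eq:EVI} with $v=v_{n,i}$, weight it by $b_i$, and add it to the discrete inequality of Step~1 with $w=u(t)$, proceeding as in the doubling-of-variables argument for implicit Euler \cite{MR1737503,MR1377244}. The $\Phi$ terms cancel in pairs.

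What remains are consistency terms controlled by $\|u_n-v_{n,i}\|_H\lesssim\tau\,\|\partial^0\Phi(u_n)\|_H$ and by the Lipschitz continuity of $u$. Together with $\bfb^\top\bfone=1$, this should yield
\[
\max_n\|u(t_n)-u_n\|_H\lesssim\bigl(\tau\,\|\partial^0\Phi(u_0)\|_H\,(\cdot)\bigr)^{1/2}\to0 .
\]
Uniform steps keep the bookkeeping simple.

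\textbf{Step 3: density in the initial datum.} For general $u_0\in\mclD(\Phi)$, use that $\overline{\mclD(\partial\Phi)}=\overline{\mclD(\Phi)}=H$. Approximate $u_0$ by $u_0^\epsilon\in\mclD(\partial\Phi)$. The continuous flow is a contraction, and so is the discrete flow by Step~1, so a $3\epsilon$ argument transfers the estimate of Step~2. Finally, pass from the nodal values to $\bar u_\bftau$ by using the continuity of $u$ on $[0,T]$.

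\textbf{Main obstacle.} I expect Step~2 to be the hardest part. The stages $v_{n,i}$ do not lie on the line between $u_n$ and $u_{n+1}$, so one must show that $\Phi(v_{n,i})$ and $\|v_{n,i}-u_n\|_H$ are controlled by the a priori bound $\|\partial^0\Phi(u_n)\|_H\le\|\partial^0\Phi(u_0)\|_H$. I do not expect that bound to hold exactly for the discrete scheme. My first attempt would use only nonexpansiveness: compare the scheme started at $u_n$ with the scheme started at $u_n-\tau\,\partial^0\Phi(u_n)$, and bound the one-step increments in terms of $\|\partial^0\Phi(u_0)\|_H$.
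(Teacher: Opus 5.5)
\textbf{The gap is in Step~2.} Your Steps~1 and~3 are sound. Step~1 is the usual algebraic-stability identity, and it even gives $\sum_i b_i\Phi(v_{n,i})$ where Proposition~\ref{prop:LocEnergyDissipationGradFlows} only has $\Phi(\mathring v_n)$. Step~2, however, carries the whole content of the theorem, and it is not proved; the route you sketch cannot be closed with the tools you list. After the $\Phi$ terms cancel, you are left with the mismatch $\bigl(\diff_t u(t),u_{n+1}-\mathring v_n\bigr)$, where $\mathring v_n=\sum_i b_i v_{n,i}$. Making this $O(\tau)$ requires $\max_{n,i}\nrm{v_{n,i}-u_n}_H\lesssim\tau$ uniformly in $n$, that is, uniform bounds on the $\xi_{n,i}$.

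Nothing you have gives this. For $n\ge1$, $u_n$ is produced by the extrapolation \eqref{eq:Extrapolation}, not by a resolvent, unless the scheme is stiffly accurate (neither example in Section~\ref{sec:GradFlows} is). So in general $u_n\notin\mclD(\partial\Phi)$, a set that need not even be linear (e.g.\ for $p$-Laplacian energies). Then $\partial^0\Phi(u_n)$ does not exist, and your fallback of restarting from $u_n-\tau\,\partial^0\Phi(u_n)$ is undefined. Nonexpansiveness controls only the increments: $\nrm{u_{n+1}-u_n}_H\le\nrm{u_1-u_0}_H\lesssim\tau\nrm{\partial^0\Phi(u_0)}_H$. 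The stage maps $u\mapsto v_i(u)$, and likewise $u\mapsto\tau\xi_i(u)$, are merely Lipschitz with Butcher-table constants. Transporting the first-step bound along the trajectory therefore gives only $\nrm{v_{n,i}-u_n}_H\lesssim(\tau+t_n)\nrm{\partial^0\Phi(u_0)}_H$, which is useless. Nor is there a telescoping energy estimate to supply an $L^2$-in-time substitute: Corollary~\ref{cor:LocalEnergyDissipation} does not telescope, and Section~\ref{sec:GradFlows} exhibits $\Phi(u_{n+1})>\Phi(\mathring v_n)$.

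\textbf{The missing idea} is to avoid stage bounds along the trajectory altogether. The paper gives no proof of its own; it quotes \cite[Theorem 1]{MR2566077}, whose argument, as far as I can tell, comes from nonlinear semigroup theory (the Brezis--Pazy nonlinear Chernoff formula). The uniform-step hypothesis reflects this, since the argument concerns powers $S_\tau^n$ of the one-step map. Set $A_\tau\coloneqq(I-S_\tau)/\tau$, and let $e^{-tA_\tau}$ denote the flow of the Lipschitz ODE $w'=-A_\tau w$. The nonexpansiveness from your Step~1 alone yields
$\nrm{S_\tau^n u_0-e^{-n\tau A_\tau}u_0}_H\le\sqrt n\,\nrm{u_0-S_\tau u_0}_H\lesssim\sqrt{t_n\tau}\,\nrm{\partial^0\Phi(u_0)}_H$
for $u_0\in\mclD(\partial\Phi)$. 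Here only the first step enters, which is exactly where resolvent estimates do apply. It then remains to prove resolvent (graph) consistency $(I+\lambda A_\tau)^{-1}\to(I+\lambda\partial\Phi)^{-1}$, which is where $\bfb^\top\bfone=1$ is used. A nonlinear Trotter--Kato theorem then gives $e^{-tA_\tau}u_0\to u(t)$ uniformly on $[0,T]$, and your Step~3 (density plus the two contractions) finishes the proof as you describe.
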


We shall provide some further properties of DIRK schemes that are $U$- or $B$-stable when applied to \eqref{eq:EVI}.
We begin by recalling that gradient flows are nonexpansive, in the sense that if $u, \tilde{u}$ denote solutions to \eqref{eq:EVI}, then
\[
  \| u(t) - \tilde{u}(t) \|_H \leq \| u(0) - \tilde{u}(0) \|_H, \qquad \forall t \in (0,T].
\]
In \cite{MR4121887} the question of contractivity of Runge-Kutta schemes applied to convex gradient flows where the derivative of the potential is globally Lipschitz continuous is studied. It is shown that the contractive property of the exact flow does not automatically transfer to arbitrary Runge-Kutta discretizations. The construction of a convex potential for which there is a Runge-Kutta scheme that is not contractive for any time step is demonstrated as well.

\subsection{Discrete energy dissipation}

Our goal here is to understand the behavior of the energy as we move from $u_n$ to the stages, from one stage to the next, and to $u_{n+1}$. To this end we begin by recalling that, according to \cite[Theorem 2.1]{MR3194795}, see also \cite{MR1293524}, if the energy $\Phi$ is twice continuously differentiable and with Lipschitz continuous Hessian, and the time step is sufficiently small, we have
\begin{equation}
\label{eq:Phiunp1leqPhiun}
  \Phi(u_{n+1}) \leq \Phi(u_n), \qquad \forall n \in \{0, \ldots, \mclN-1\}.
\end{equation}
In applications of gradient flows, in particular those related to partial differential equations, energies are rarely twice differentiable, and when they are, the time step constraint involves the Lipschitz constant of the Hessian, making this restriction rather stringent in practice. For this reason, here we explore some alternatives.

%

We begin with a variational characterization of the stages.

\begin{lemma}[variational characterization]
\label{lem:VarStages}
  Assume that the DIRK scheme \eqref{eq:Stages}--\eqref{eq:Advance} is $U$-stable in the sense of Definition~\ref{def:Ustability}. Define, for $n \in \{0, \ldots, \mclN-1\}$,
  \begin{align*}
    g_{n,1} &\coloneqq u_n, 
    \\
    g_{n,i} &\coloneqq u_n - \sum_{j=1}^{i-1} a_{ij}\tau_n \Diff\Phi(v_{n,j}).
  \end{align*}
  Then, the stages satisfy
  \[
    \frac1{2a_{ii}\tau_n} \left[ \| v_{n,i} - g_{n,i} \|_H^2 - \| w - g_{n,i} \|_H^2 \right] + \Phi(v_{n,i}) \leq \Phi(w), \quad \forall w \in H.
  \]
  In particular, we have
  \[
    \Phi(v_{n,1}) \leq \Phi(u_n).
  \]
\end{lemma}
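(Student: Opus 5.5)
The plan is to rewrite each stage equation \eqref{eq:Stages}, specialized to the gradient flow setting ($V=H$, $f\equiv0$, $\mclA=\Diff\Phi$), as one step of implicit Euler with step $a_{ii}\tau_n$ started from the point $g_{n,i}$. Then convexity of $\Phi$ gives the claimed variational inequality.

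First, since $\msfA$ is lower triangular, the $i$-th stage reads
\[
  v_{n,i} - u_n + \tau_n \sum_{j=1}^{i} a_{ij}\Diff\Phi(v_{n,j}) = 0 .
\]
Isolating the diagonal term and using the definition of $g_{n,i}$ (with the empty sum for $i=1$), this becomes
\[
  \frac{g_{n,i}-v_{n,i}}{a_{ii}\tau_n} = \Diff\Phi(v_{n,i}),
\]
which uses $a_{ii}>0$. In the possibly nonsmooth setting I read $\Diff\Phi(v_{n,i})$ as an element of the subdifferential $\partial\Phi(v_{n,i})$. This is consistent with how \eqref{eq:EVI} is meant, and local solvability guarantees the stage exists.

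Second, by the subgradient inequality, for every $w\in H$,
\[
  \Phi(v_{n,i}) + \frac{1}{a_{ii}\tau_n}(g_{n,i}-v_{n,i}, w - v_{n,i}) \leq \Phi(w).
\]
Next apply the polarization identity
\[
  2(g-v,w-v) = \|v-g\|_H^2 + \|w-v\|_H^2 - \|w-g\|_H^2 .
\]
Dropping the nonnegative term $\|w-v_{n,i}\|_H^2$ yields exactly the stated inequality with the factor $\frac{1}{2a_{ii}\tau_n}$.

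Third, for the particular consequence take $i=1$, where $g_{n,1}=u_n$, and choose $w=u_n$. The inequality becomes
\[
  \frac1{2a_{11}\tau_n}\|v_{n,1}-u_n\|_H^2 + \Phi(v_{n,1}) \leq \Phi(u_n),
\]
and the first term is nonnegative, so $\Phi(v_{n,1}) \leq \Phi(u_n)$. Note that $U$-stability is not really used beyond guaranteeing $a_{ii}>0$, which already holds by the standing assumptions on the Butcher table.

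The only delicate point I expect is the interpretation of $\Diff\Phi$ when $\Phi$ is merely convex and LSC, namely that the stage equation must be read as the inclusion $g_{n,i}-v_{n,i} \in a_{ii}\tau_n\partial\Phi(v_{n,i})$. I would handle this by noting that $v_{n,i}$ is then the minimizer of $\frac{1}{2a_{ii}\tau_n}\|\cdot-g_{n,i}\|_H^2+\Phi$. With that reading, the inequality even holds with the extra term $\frac1{2a_{ii}\tau_n}\|w-v_{n,i}\|_H^2$ on the left, by the $\frac1{a_{ii}\tau_n}$-strong convexity of this functional.
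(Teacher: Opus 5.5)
Your proof is correct and takes the same route as the paper. The paper rewrites the stage as $\frac{1}{a_{ii}\tau_n}(v_{n,i}-g_{n,i},w) + \Diff\Phi(v_{n,i}) = 0$, reads the inequality off the fact that $v_{n,i}$ minimizes $\Phi_{n,i}(w) = \frac{1}{2a_{ii}\tau_n}\|w-g_{n,i}\|_H^2 + \Phi(w)$, and then sets $w=u_n$ for $i=1$. Your subgradient-plus-polarization computation is the explicit justification of that minimality; it also gives the sharper inequality with the extra term $\frac{1}{2a_{ii}\tau_n}\|w-v_{n,i}\|_H^2$, and you are right that $U$-stability is not used beyond $a_{ii}>0$.
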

\begin{proof}
  The sought variational characterization is obtained by realizing that each stage can be written as a minimizing-movements-like scheme. Thus, we follow ideas from this context; see, for instance, \cite[Lecture 12]{MR4886890}.
  
  Notice that, in our setting, we may rewrite \eqref{eq:Stages} as
  \[
    \frac1{a_{ii}\tau_n} \left( v_{n,i} - g_{n,i}, w \right) + \Diff \Phi(v_{n,i}) = 0.
  \]
  In other words, each stage minimizes the energy
  \[
    \Phi_{n,i}(w) \coloneqq \frac1{2a_{ii}\tau_n} \left\| w - g_{n,i} \right\|_H^2 + \Phi(w),
  \]
  that is,
  \[
    \Phi_{n,i}(v_{n,i}) \leq \Phi_{n,i}(w), \qquad \forall w\in H,
  \]
  which is the claimed inequality.
  
  To bound the energy at the first stage, we only need to set $w=u_n$ in the obtained estimate and use the definition of $g_{n,1}$.
\end{proof}

We next obtain what we consider a discrete analogue of \eqref{eq:EnergyDissipation}. To state it, we shall define
\begin{equation}
\label{eq:defOfLocalMathringV}
  \mathring{v}_n \coloneqq \sum_{i=1}^s b_i v_{n,i}.
\end{equation}

\begin{proposition}[local energy dissipation]
\label{prop:LocEnergyDissipationGradFlows}
  Assume that the DIRK scheme \eqref{eq:Stages}--\eqref{eq:Advance} is $U$-stable in the sense of Definition~\ref{def:Ustability}. Then, for every $n \in \{ 0, \ldots, \mclN-1\}$, we have
  \[
    \frac1{2\tau_n} \left[ \| u_{n+1} - w\|_H^2 - \| u_n - w \|_{\AJS{H}}^2 \right] + \Phi(\mathring{v}_n) \leq \Phi(w), \qquad \forall w \in H.
  \]
\end{proposition}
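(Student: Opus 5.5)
The plan is to derive the inequality from the local energy identity \eqref{eq:BochnerStableLocal} of Proposition~\ref{prop:BochnerStable}, applied to a \emph{shifted} problem, and then to use convexity of $\Phi$. Fix $w \in H$ and $n$. Set $\check{u}_n \coloneqq u_n - w$, $\check{u}_{n+1} \coloneqq u_{n+1} - w$ and $\check{v}_{n,i} \coloneqq v_{n,i} - w$. Define the shifted operator $\mclA_w(z) \coloneqq \Diff\Phi(z + w)$, and take $f \equiv 0$.

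\textbf{Step 1: the shifted iterates solve the same scheme.} Since constants are annihilated by the difference quotients in \eqref{eq:Stages}--\eqref{eq:Advance}, the shifted iterates satisfy that scheme with $\mclA$ replaced by $\mclA_w$. The extrapolation identity \eqref{eq:Extrapolation} is also shift invariant, because its coefficients $(1-\bflambda^\top\bfone)$ and $\lambda_i$ sum to one.

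\textbf{Step 2: apply the local identity.} The derivation of \eqref{eq:BochnerStableLocal} in \cite{MR4673496} is purely algebraic. It tests \eqref{eq:Stages}--\eqref{eq:Advance} with suitable combinations of the stages and uses \eqref{eq:Extrapolation}, and it never invokes the structural assumptions on the operator. The main point to check is exactly this: that no coercivity or linearity of $\mclA$ enters the identity itself, as opposed to the subsequent estimates. Granting it, and using $\nu_i = b_i$ from the quadrature weights lemma, we get
\[
  \frac12 \| u_{n+1} - w \|_H^2 + \mclQ_s(\check{u}_n, \check{v}_{n,1}, \ldots, \check{v}_{n,s}) + \tau_n \sum_{i=1}^s b_i \left( \Diff\Phi(v_{n,i}), v_{n,i} - w \right) = \frac12 \| u_n - w \|_H^2 .
\]
By $U$-stability $\mclQ_s \geq 0$, so we may drop it. If $\Phi$ is not differentiable, $\Diff\Phi(v_{n,i})$ should be read as the specific subgradient selected by \eqref{eq:Stages}; this is the element appearing in the variational characterization of Lemma~\ref{lem:VarStages}.

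\textbf{Step 3: convexity.} The subgradient inequality gives $(\Diff\Phi(v_{n,i}), v_{n,i} - w) \geq \Phi(v_{n,i}) - \Phi(w)$. Since $b_i > 0$, dividing by $\tau_n$ yields
\[
  \frac1{2\tau_n}\left[ \|u_{n+1}-w\|_H^2 - \|u_n - w\|_H^2 \right] + \sum_{i=1}^s b_i \Phi(v_{n,i}) \leq \Phi(w),
\]
where we also used $\sum_i b_i = 1$.

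\textbf{Step 4: Jensen.} The weights $b_i$ are positive and sum to one, so convexity of $\Phi$ gives $\Phi(\mathring{v}_n) \leq \sum_i b_i \Phi(v_{n,i})$ by \eqref{eq:defOfLocalMathringV}. This concludes the proof.

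The only delicate point is Step 2, namely verifying that the identity of Proposition~\ref{prop:BochnerStable} holds verbatim for the nonlinear, shifted, possibly set-valued operator. If needed, I would redo the telescoping computation of \cite{MR4673496} directly, replacing $\mclA v_{n,i}$ by the abstract vectors $\xi_i \coloneqq \Diff\Phi(v_{n,i}) \in H$.
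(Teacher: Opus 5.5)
Your argument is correct and uses the same ingredients as the paper's proof: the local identity \eqref{eq:BochnerStableLocal} with $\nu_i=b_i$, dropping $\mclQ_s\geq 0$, the subgradient inequality, and Jensen's inequality with the weights $b_i$. The only difference is that you translate the whole scheme by $w$. The paper instead applies \eqref{eq:BochnerStableLocal} to the original iterates, then inserts $w$ by testing \eqref{eq:Advance} with $w$ and using $\|u_{n+1}\|_H^2 - 2(u_{n+1}-u_n,w) - \|u_n\|_H^2 = \|u_{n+1}-w\|_H^2 - \|u_n-w\|_H^2$. That one-line computation is exactly the translation invariance you flag as delicate in Step 2, so it settles that point without revisiting \cite{MR4673496} for the shifted operator $\mclA_w$, which in general is not coercive.
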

\begin{proof}
  By convexity, for every $w \in H$, we have
  \[
    \left( \Diff\Phi(v_{n,i}), v_{n,i}-w \right) \geq \Phi(v_{n,i}) - \Phi(w),
  \]
  which we may multiply by $b_i>0$ and add over $i=1,\ldots,s$ to obtain
  \[
    \sum_{i=1}^s b_i \left( \Diff\Phi(v_{n,i}), v_{n,i}-w \right) \geq \Phi( \mathring{v}_n ) - \Phi(w).
  \]
  With this at hand \AJS{we notice that \eqref{eq:BochnerStableLocal},} when made specific to our setting, can be rewritten as
  \[
    \frac1{2\tau_n}\| u_{n+1}\|_H^2 + \sum_{i=1}^s b_i \left( \Diff \Phi(v_{n,i}), w \right) + \Phi( \mathring{v}_n ) \leq \frac1{2\tau_n}\| u_{n}\|_H^2 + \Phi(w),
  \]
  for every $w \in H$. Next, we take the inner product of the update equation \eqref{eq:Advance} with $w$ to obtain
  \[
    \sum_{i=1}^s b_i \left( \Diff \Phi(v_{n,i}), w \right) = -\frac1{\tau_n} \left( u_{n+1} - u_n,w \right).
  \]
  Since
  \[
    \| u_{n+1}\|_H^2 - 2\left( u_{n+1} - u_n,w \right) - \| u_{n}\|_H^2 = \| u_{n+1} - w \|_{\AJS{H}}^2 - \| u_n - w\|_{\AJS{H}}^2.
  \]
  the result follows.
\end{proof}

We state an important consequence of the previous estimate.

\begin{corollary}[local energy dissipation]
\label{cor:LocalEnergyDissipation}
  In the setting of Proposition~\ref{prop:LocEnergyDissipationGradFlows}, we have
  \[
    \frac{\tau_n}2 \left\| \frac{u_{n+1}-u_n}{\tau_n} \right\|_H^2 + \Phi(\mathring{v}_{n}) \leq \Phi(u_n), \qquad n = 0, \ldots, \mclN-1,
  \]
\end{corollary}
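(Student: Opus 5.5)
The plan is to specialize Proposition~\ref{prop:LocEnergyDissipationGradFlows} to the test function $w = u_n$ and then read off the dissipation term from the update equation. With $w=u_n$ the second term in the bracket vanishes, and the inequality becomes
\[
  \frac1{2\tau_n} \| u_{n+1} - u_n \|_H^2 + \Phi(\mathring{v}_n) \leq \Phi(u_n).
\]
It then only remains to rewrite the first term as
\[
  \frac1{2\tau_n} \| u_{n+1} - u_n \|_H^2 = \frac{\tau_n}{2} \left\| \frac{u_{n+1}-u_n}{\tau_n} \right\|_H^2,
\]
which is exactly the claimed estimate, valid for every $n \in \{0,\ldots,\mclN-1\}$.

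The steps, in order, are as follows. First, check that $w=u_n$ is an admissible choice in Proposition~\ref{prop:LocEnergyDissipationGradFlows}. The proposition holds for all $w\in H$, and $u_n \in H$, so this is fine. We also need $\Phi(u_n)<\infty$ for the statement to be meaningful. For $n=0$ this follows from $u_0\in\mclD(\Phi)$. For $n\ge1$ it is not needed, since if $\Phi(u_n)=+\infty$ the inequality is trivial. Second, substitute and simplify as above.

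There is no real obstacle. The only point worth remarking is interpretive: the left-hand side involves $\Phi(\mathring{v}_n)$ rather than $\Phi(u_{n+1})$. So this is a discrete analogue of \eqref{eq:EnergyDissipation} that does not require any step-size restriction, in contrast with \eqref{eq:Phiunp1leqPhiun}. Optionally, since $\mathring{v}_n$ is a convex combination of stages ($b_i>0$, $\sum_i b_i=1$), one may note that $\Phi(\mathring{v}_n)\le\sum_i b_i\Phi(v_{n,i})$. However, that bound goes the wrong direction to be useful here, so I would not use it. Summing the estimate over $n$ is also unnecessary for the statement as given.
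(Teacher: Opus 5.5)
Your proof is correct and matches the paper's argument: the paper also just sets $w=u_n$ in Proposition~\ref{prop:LocEnergyDissipationGradFlows} and rewrites $\frac{1}{2\tau_n}\|u_{n+1}-u_n\|_H^2$ as $\frac{\tau_n}{2}\|(u_{n+1}-u_n)/\tau_n\|_H^2$. Your side remarks on why $w=u_n$ is admissible and on the case $\Phi(u_n)=+\infty$ are harmless additions.
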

\begin{proof}
  One merely needs to set $w = u_n$.
\end{proof}


\subsection{About energy monotonicity of the stages}

We now make some comments with regard to Corollary~\ref{cor:LocalEnergyDissipation}. First, while we have
\[
  \Phi(\mathring{v}_n) \leq \Phi(u_n),
\]
a stronger form of local energy dissipation would be that, for all $n$ and $i<j$,
\[
  \Phi(v_{n,j}) \leq \Phi( v_{n,i} ).
\]
The following simple counterexample, however, shows that this is not the case in general.

We introduce the two stage scheme
\[
  \begin{array}{c|cc}
    \frac13 & \frac13 & 0
    \\
    \frac56 & \frac23 & \frac16
    \\
    \hline
           & \frac23 & \frac13
  \end{array} \ .
\]
The reader can easily verify that this scheme is consistent to second order, and that $\msfM = \msfO$, so that this scheme is $B$-stable. Consider now the simplest of examples, namely $H = \Real$ and, for $k>0$,
\begin{equation}
\label{eq:SimplestEnergy}
  \Phi(w) = \frac{k}2 w^2.
\end{equation}
For any $u_n \in \Real$,
\[
  v_{n,1} = \frac{3}{3+k\tau_n} u_n,
  \qquad\qquad
  v_{n,2} = \frac{6(3-k\tau_n)}{(3+k\tau_n)(6+k\tau_n)} u_n,
  \qquad\qquad
  u_{n+1} = \frac{(3-k\tau_n)(6-k\tau_n)}{(3+k\tau_n)(6+k\tau_n)} u_n.
\]
Clearly, for every value of $k$ and $\tau_n$,
\begin{equation}
\label{eq:PhivleqPhiun}
  \Phi( v_{n,i} ) \leq \Phi(u_n), \qquad i = 1,2,
\end{equation}
and \eqref{eq:Phiunp1leqPhiun} holds. However, if $k \tau_n > 12$,
\[
  \Phi(v_{n,2}) > \Phi(v_{n,1}),
\]
unless $u_n = 0$. On the other hand,
\[
  \mathring{v}_n = \frac{18}{(3+k\tau_n)(6+k\tau_n)} u_n,
\]
which clearly satisfies $\Phi(\mathring{v}_n) \leq \Phi(u_n)$. However, if $k \tau_n >9$,
\[
  \Phi(u_{n+1}) > \Phi\left( \mathring{v}_n \right).
\]

In the previous example the energy of the stages was controlled by the energy at $u_n$; see \eqref{eq:PhivleqPhiun}. The final example, however, shows that this is not the case in general either.

The DIRK scheme
\[
  \begin{array}{c|cc}
    \gamma & \gamma \\
    1-\gamma & 1-2\gamma & \gamma
    \\
    \hline
    & \frac12 & \frac12
  \end{array}
  \qquad\qquad\qquad
  \gamma = 1 + \frac{\sqrt{2}}2
\]
was originally introduced in \cite{MR518683}. It is $U$-stable; see \cite[Section 4.2.1]{MR4673496}.

We apply the scheme to the gradient flow with energy given by \eqref{eq:SimplestEnergy}. For $u_n \in \Real$ we obtain
\[
  v_{n,1} = \frac{2}{ 2 + \left(\sqrt{2}+2\right) k\tau_n } u_n,
  \qquad
  v_{n,2} = \frac{2  \left(4 k \tau_n +3 \sqrt{2} k \tau_n +2\right)}{\left(2 k \tau_n +\sqrt{2} k \tau_n +2\right)^2} u_n,
  \qquad
  u_{n+1} = \frac{4  \left(k \tau_n +\sqrt{2} k \tau_n +1\right)}{\left(2 k \tau_n +\sqrt{2} k \tau_n +2\right)^2} u_n.
\]
Simple and direct, but somewhat lengthy, calculations show that, if
\[
  k \tau_n > 3\sqrt{2}-4 \approx 0.242641,
\]
then we have 
\[
  \Phi(v_{n,2}) > \Phi(u_n).
\]

In conclusion, even for $U$-stable schemes, without restrictions on the time step, very little can be asserted about energy monotonicity properties for the stages.

\section*{Acknowledgements}

The work of the authors is partially supported by NSF grant DMS-2409918.

\bibliographystyle{plain}
\bibliography{references}

\end{document}

%% file: AERMacros.tex
\usepackage{amssymb}
\usepackage{latexsym,amsmath,amsfonts,amscd}
\usepackage{amsthm}
\usepackage{color}
\usepackage[dvipsnames]{xcolor}
\usepackage{mathtools}

\usepackage{hyperref}

\DeclareMathOperator*{\diag}{diag}

\newcommand{\nrm}[1]{\left\| #1 \right\|}

\newcommand{\bftau}{{\boldsymbol{\tau}}}

\newcommand{\bflambda}{\mbox{\boldmath$\lambda$}}
\newcommand{\Real}{\mathbb{R}}

\newcommand{\Natural}{\mathbb{N}}

\newcommand{\msfA}{\mathsf{A}}
\newcommand{\msfB}{\mathsf{B}}

\newcommand{\msfM}{\mathsf{M}}

\newcommand{\msfO}{\mathsf{O}}

\newcommand{\msfQ}{\mathsf{Q}}

\newcommand{\msfK}{\mathsf{K}}
\newcommand{\msfL}{\mathsf{L}}

\newcommand{\mclA}{\mathcal{A}}
\newcommand{\mclK}{\mathcal{K}}
\newcommand{\mclX}{\mathcal{X}}
\newcommand{\mclW}{\mathcal{W}}

\newcommand{\mclQ}{\mathcal{Q}}

\newcommand{\mclC}{\mathcal{C}}
\newcommand{\mclN}{\mathcal{N}}
\newcommand{\mclD}{\mathcal{D}}
\newcommand{\mclM}{\mathcal{M}}
\newcommand{\bfone }{\mathbf{1}}
\newcommand{\bfzero}{\mathbf{0}}

\newcommand{\bfb}{\boldsymbol{b}}
\newcommand{\bfc}{\boldsymbol{c}}

\newcommand{\bfe}{\boldsymbol{e}}
\newcommand{\bff}{\boldsymbol{f}}

\newcommand{\bfl}{\boldsymbol{l}}

\newcommand{\bfy}{\boldsymbol{y}}
\newcommand{\bfz}{\boldsymbol{z}}

\newcommand{\mbbP}{\mathbb{P}}

\newcommand\restr[2]{{
		\left.\kern-\nulldelimiterspace 
		#1 
		\vphantom{\big|} 
		\right|_{#2} 
}}

\newtheorem{theorem}{Theorem}[section]
\newtheorem{corollary}{Corollary}[section]
\newtheorem{lemma}{Lemma}[section]
\newtheorem{proposition}{Proposition}[section]

\theoremstyle{definition}
\newtheorem{definition}{Definition}[section]

\newtheorem{remark}{Remark}[section]

\newcommand{\diff}{\mathrm{d}}
\newcommand{\Diff}{\mathrm{D}}